\documentclass{article}
\usepackage{fancyhdr}
\pagestyle{headings}
\fancyfoot{}{}{}
\fancyhead[LO,RE]{\leftmark}
\fancyhead[LE,RO]{\thepage}

\rhead{\leftmark\\\rightmark}

\usepackage[utf8]{inputenc}
\usepackage[top=1.15in, bottom=1.15in, left=1.17in, right=1.17in]{geometry}
\usepackage{amsthm,dsfont, amsmath, amssymb, amscd, latexsym, multicol, verbatim, enumerate, graphicx,xy, color}
\usepackage{amstext,amsfonts,amssymb,amscd,amsbsy,amsmath}
\usepackage{authblk}
\usepackage[nottoc,numbib]{tocbibind}
\usepackage[titletoc]{appendix}
\usepackage{tikz,float}
\usetikzlibrary{patterns}
\usepackage[english]{babel}
\usepackage{cite}
\usepackage{stmaryrd}
\usepackage[colorlinks=true, pdfstartview=FitV, linkcolor=blue, citecolor=blue, urlcolor=blue, breaklinks=true]{hyperref}

\usepackage{helvet}         % selects Helvetica as sans-serif font
\usepackage{courier}        % selects Courier as typewriter font
\usepackage{type1cm}  
\usepackage{multicol}        % used for the two-column index
\usepackage[bottom]{footmisc}
\usepackage{longtable}

\usepackage{hyperref}
\usepackage{makeidx}         % allows index generation
\usepackage{graphicx}        % standard LaTeX graphics tool
                             % when including figure files
% see the list of further useful packages
% in the Reference Guide

\makeindex             % used for the subject index
                       % please use the style svind.ist with
                       % your makeindex program

\usepackage{amstext,amsfonts,amssymb,amscd,amsbsy,amsmath}
\usepackage[ruled,vlined]{algorithm2e}
\usepackage{algpseudocode}
\usepackage{tikz-cd}	
\usepackage{tikz}
\usetikzlibrary{arrows}

\usetikzlibrary{decorations.markings}
\setcounter{secnumdepth}{3}
\usepackage{youngtab}

%\makeatother \theoremstyle{remark}
\numberwithin{equation}{section}

\theoremstyle{definition}
\newtheorem{definition}{Definition}%\theoremstyle{definition}
\newtheorem{proposition}{Proposition}
\newtheorem{theorem}{Theorem}
\newtheorem{lemma}{Lemma}
\newtheorem{corollary}{Corollary}
\newtheorem{remark}{Remark}
\newtheorem{example}{Example}

\newtheorem*{corollary*}{Corollary}
\newtheorem*{lemma*}{Lemma}
\newtheorem*{theorem*}{Theorem}
\newtheorem*{proposition*}{Proposition}
\newtheorem*{problem*}{Problem}

\newcommand{\w}{\underline{w}}
\newcommand{\p}{\mathbf{p}}

\newcommand{\boundellipse}[3]% center, xdim, ydim
{(#1) ellipse (#2 and #3)
}

\newcommand{\bm}{\mathbf{m}}
\newcommand{\bn}{\mathbf{n}}

\newcommand{\RR}{\mathbb{R}}

\DeclareMathOperator{\trop}{trop}

\DeclareMathOperator{\Proj}{Proj}

\newcommand{\gr}{{\operatorname*{gr}}}
\newcommand{\lie}{\mathfrak}
\newcommand{\Gr}{{\operatorname*{Gr}}}

\newcommand{\ui}{\underline{i}}
\newcommand{\uj}{\underline{j}}

\newcommand{\val}{\mathfrak{v}}

\DeclareMathOperator{\init}{in}

\DeclareMathOperator{\height}{ht}
\DeclareMathOperator{\conv}{conv}

\DeclareMathOperator{\rank}{rank}

\usepackage[utf8]{inputenc}

\title{Birational sequences and the tropical Grassmannian}
\author{Lara Bossinger\footnote{Supported by "Programa de Becas Posdoctorales en la UNAM 2018" Instituto de Matem\'aticas, Universidad Nacional Aut\'onoma de M\'exico.}}
\date{}
\begin{document}

\maketitle
\vspace{-.75cm}
\begin{abstract}
   We introduce iterated sequences for Grassmannians, a new class of Fang-Fourier-Littelmann's birational sequences and explain how they give rise to points in the tropical Grassmannian.
    For $\Gr(2,\mathbb C^n)$ we show that the associated valuations induce toric degenerations. 
    We describe recursively the vertices of the corresponding Newton--Okounkov polytopes, which are particular vertices of a hypercube and hence integral.
    We show further that every toric degenerations of $\Gr(2,\mathbb C^{n})$ constructed using the tropical Grassmannian due to Speyer and Sturmfels can be recovered by iterated sequences.
\end{abstract}

\section{Introduction}
Toric degenerations of Grassmannians or more generally flag and spherical varieties have been a vivid research topic during the last 20 years.
A novel framework, called \emph{birational sequences}, stemming from representation theory was given by Fang, Fourier and Littelmann in \cite{FFL15}. 
Within this framework, we introduce and analyze new birational sequences for Grassmannians and discover surprising connections to the tropical Grassmannian by Speyer--Sturmfels \cite{SS04}.

To be more precise we need to introduce a bit of notation.
A \emph{toric degeneration} of a projective variety $X$ is a flat morphism $\pi:\mathcal X\to \mathbb A^d$ with generic fiber $\pi^{-1}(t)$ for $t\not=0$ isomorphic to $X$ and $\pi^{-1}(0)$ a projective toric variety.
Consider $SL_n$ over $\mathbb C$ with Borel subgroup $B$ of upper triangular matrices.
Let $P_k$ be the parabolic subgroup of block-upper triangular matrices with blocks of size $k\times k$ and $(n-k)\times (n-k)$. In particular, the Grassmannian $\Gr(k,\mathbb C^n)$ is isomorphic to $SL_n/P_k$. 
Denote its dimension by $d$.
Further, let $U^-$ be lower triangular matrices with $1$'s along the diagonal.
The root system of type $\mathtt A_{n-1}$ is  $R=\{\varepsilon_i-\varepsilon_j\}_{i\neq j}$ where $\{\varepsilon_i\}_i$ is the standard basis of $\mathbb R^n$. 
For every positive root $\beta\in R^+$ we have the one-parameter root subgroup $U_{-\beta}\subset U^-$.
In this setting the notion of birational sequences due to Fang, Fourier and Littelmann \cite{FFL15} applies:
a sequence $S=(\beta_1,\dots,\beta_d)$ of positive roots is called \emph{birational} for $\Gr(k,\mathbb C^n)$ if the multiplication map
\[
U_{-\beta_1}\times \dots \times U_{-\beta_d}\to U^-
\]
has image birational to $\Gr(k,\mathbb C^n)$. 
Birational sequences give rise to coordinates on the Grassmannian.
These can be used to define valuations on the homogeneous coordinate ring.
The construction allows to compute certain values of the valuations explicitly via representations of the Lie algebra $\mathfrak{sl}_n$.
If the valuation has finitely generated value semi-group it induces a toric degeneration of the Grassmannian by \cite{An13}.
Their setting unifies many known constructions of toric degenerations in representation theory, such as \cite{GL96,Cal02,AB04,FFL11}.
However, besides the previously known cases few \emph{new} birational sequences were discovered so far.
A central question is whether toric degeneration\textcolor{red}{s} constructed from tropical geometry (see e.g. \cite{SS04} for the case of Grassmannians) or cluster algebras (see e.g. \cite{RW17}) can be recovered using birational sequences.

\medskip
In this paper we study birational sequences that do not correspond to the previously known ones. One way to construct them is provided by the following central Lemma (see also Lemma~\ref{lem:birat}). It allows to obtain a birational sequences for $\Gr(k,\mathbb C^{n+1})$ from a birational sequence for $\Gr(k,\mathbb C^n)$.

\begin{lemma*}
Let $S=(\beta_1,\dots,\beta_d)$ be a birational sequence for $\Gr(k,\mathbb C^n)$ and choose $i_1,\dots,i_k$ pairwise differently from $\{1,\dots,n\}$. Then the following is a birational sequence for $\Gr(k,\mathbb C^{n+1})$:
\[
S'=(\varepsilon_{i_1}-\varepsilon_{n+1},\dots,\varepsilon_{i_k}-\varepsilon_{n+1},\beta_1,\dots,\beta_d).
\]
\end{lemma*}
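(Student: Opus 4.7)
Writing $u_{-\beta}(t) = I + t E_{ji}$ for $\beta = \varepsilon_i - \varepsilon_j \in R^+$ (with $E_{ji}$ the $(j,i)$-elementary matrix), the new root subgroups $U_{-(\varepsilon_{i_j}-\varepsilon_{n+1})}$ consist of matrices $I + t_j E_{n+1, i_j}$. These commute pairwise (all are supported in row $n+1$, and $i_j \le n$), multiplying to
\[
M_1(t) = \begin{pmatrix} I_n & 0 \\ v(t)^T & 1 \end{pmatrix}, \quad \text{where } v(t)_{i_j} = t_j \text{ and } v(t)_\ell = 0 \text{ for } \ell \notin \{i_1,\dots,i_k\}.
\]
The remaining $d$ factors come from $S$; since their roots lie in $\mathfrak{sl}_n$, their product sits in the block-embedded $SL_n \subset SL_{n+1}$ and equals $M_2(s) = \begin{pmatrix} L_n(s) & 0 \\ 0 & 1 \end{pmatrix}$, where $L_n(s)$ is the image of the multiplication map defined by $S$ inside the unipotent radical of $SL_n$. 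The multiplication map for $S'$ is therefore
\[
\Phi(t,s) = M_1(t)\, M_2(s) = \begin{pmatrix} L_n(s) & 0 \\ v(t)^T L_n(s) & 1 \end{pmatrix},
\]
a map from $\mathbb{C}^k \times \mathbb{C}^d$ into $U^-$ of $SL_{n+1}$ whose domain has dimension $k+d = k(n+1-k) = \dim \Gr(k,\mathbb{C}^{n+1})$.

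To identify the image of $\Phi$ birationally with $\Gr(k,\mathbb{C}^{n+1})$ I compose with the quotient $U^- \to SL_{n+1}/P_k = \Gr(k,\mathbb{C}^{n+1})$ and work in its big cell. Decompose $L_n = \begin{pmatrix} A & 0 \\ C & D \end{pmatrix}$ with $A$ a $k\times k$ unipotent lower triangular matrix and $C$ of size $(n-k)\times k$. The $k$-plane spanned by the first $k$ columns of $\Phi(t,s)$, after right multiplication by $A^{-1}$, has big-cell coordinates
\[
X(s,t) = \begin{pmatrix} Y(s) \\ w_1(t,s)^T A^{-1} \end{pmatrix}, \quad Y(s) := CA^{-1},
\]
where $w_1(t,s)$ consists of the first $k$ entries of $v(t)^T L_n(s)$. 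By hypothesis, $s \mapsto Y(s)$ is birational onto the big cell of $\Gr(k,\mathbb{C}^n)$, so it is enough to show that for generic $s$ the linear map $t \mapsto w_1(t,s)^T A^{-1}$ is an isomorphism $\mathbb{C}^k \to \mathbb{C}^k$.

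A direct calculation shows that the coefficient of $t_m$ in the $j$-th entry of $w_1^T A^{-1}$ equals $\delta_{i_m, j}$ when $i_m \le k$ and $Y_{i_m - k,\, j}$ when $i_m > k$. Cofactor expansion of the resulting $k \times k$ coefficient matrix along the rows indexed by $\{m : i_m \le k\}$ reduces its determinant, up to sign, to $\det(Y')$, where $Y'$ is the square submatrix of $Y$ with rows $\{i_m - k : i_m > k\}$ and columns $\{1,\dots,k\} \setminus \{i_m : i_m \le k\}$. Since the entries of $Y$ are algebraically independent on the big cell of $\Gr(k,\mathbb{C}^n)$, $\det(Y')$ is a nonzero polynomial, so the linear map is a bijection for $s$ in a dense open set. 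Combining this with the birationality of $s \mapsto Y(s)$, the composition $(s,t) \mapsto X(s,t)$ is birational onto the big cell of $\Gr(k,\mathbb{C}^{n+1})$; together with the dimension equality above this yields that the image of $\Phi$ in $U^-$ is birational to $\Gr(k,\mathbb{C}^{n+1})$. The main technical obstacle is precisely this last invertibility check: the heuristic picture of $\Gr(k,\mathbb{C}^{n+1})$ as a rational $\mathbb{C}^k$-bundle over $\Gr(k,\mathbb{C}^n)$ makes the result plausible a priori, but one has to verify that the specific $k$ root subgroups indexed by $i_1, \dots, i_k$ give a transversal slice to the fiber.
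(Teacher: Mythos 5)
Your argument is correct, and it rests on the same underlying mechanism as the paper's proof: both write an element of the image of $\psi_{S'}$ as the matrix coming from $S$ with one extra bottom row $v(t)^T L_n(s)$ that is linear in the new parameters, and both reduce birationality for $S'$ to rationally recovering those parameters, the obstruction in each case being the nonvanishing of the Pl\"ucker coordinate $\bar p_{\underline{i}}$, where $\underline{i}$ is the ordered tuple on $i_1,\dots,i_k$. Where you differ is in how the recovery is established. The paper exhibits the inverse explicitly, $y_j\mapsto (-1)^{k-j+1}\bar p_{\underline{i}\setminus i_j\cup n+1}/\bar p_{\underline{i}}$, and checks compatibility using the Pl\"ucker relations \eqref{eq:plucker rel}, thereby producing $(\psi_{S'}^*)^{-1}$ in closed form; you instead pass to big-cell coordinates and show that the $k\times k$ coefficient matrix of the linear system in $t$ has determinant equal to a maximal minor $\det(Y')$ of the generic big-cell matrix, hence is generically invertible. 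These are two presentations of the same fact: your $\det(Y')$ is exactly $\pm\,\bar p_{\underline{i}}$ restricted to the big cell, and Cramer's rule applied to your system reproduces the paper's formulas for the $y_j$. Your route is shorter and avoids the Pl\"ucker-relation computation, while the paper's yields the explicit rational inverse on function fields, which is convenient later. Two small points you should make explicit: you use (as the paper itself does) the reading of the hypothesis that the composite $\mathbb A^{d}\dashrightarrow \Gr(k,\mathbb C^n)$ induced by $S$ is birational, not merely that the image of the multiplication map is birational to $U_k^-$; and the nonvanishing of $\det(Y'(s))$ as a function of $s$ needs the dominance of $s\mapsto Y(s)$, which that reading supplies.
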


Inspired by the Lemma we define a new class of birational sequences:
a birational sequence $S$ for $\Gr(k,\mathbb C^n)$ constructed from a birational sequence for $\Gr(k,\mathbb C^{k+1})$ by applying the Lemma $n-k-1$ many times is called \emph{iterated} (see Definition~\ref{def:itseq}).

\medskip 

Focusing on iterated sequences for $\Gr(2,\mathbb C^n)$, we show further that the corresponding valuations have the necessary property to induce toric degenerations (relying on a result of \cite{B-quasival}).
Fixing the Pl\"ucker embedding of $\Gr(2,\mathbb C^n)$ the corresponding Newton--Okounkov polytopes are in fact integral polytopes inside the hypercube\footnote{We consider the hypercube as the convex hull of all elements in $\{0,1\}^{2(n-2)}\subset \mathbb R_{\ge 0}^{2(n-2)}$.} in $\mathbb R_{\ge 0}^{2(n-2)}$ (see Corollary~\ref{cor:hypersimplex}).
The vertices are given by the images of the valuation on Pl\"ucker coordinates, which in this case form a Khovanskii basis.

Moreover, we identify iterated sequences with maximal cones in $\trop(\Gr(k,\mathbb C^n))$, the tropical Grassmannian \cite{SS04}.
For $k=2$ we give the explicit Algorithm~\ref{alg:tree assoc to seq} to do so. 
For an iterated sequence $S$ denote by $C_S\subset\trop(\Gr(2,\mathbb C^n))$ the output of the algorithm.
We say two toric degenerations of a projective variety $X$ are \emph{equivalent} if their special fibers are isomorphic toric varieties.
Our main results (Theorems~\ref{thm: seq and trop k}\&\ref{thm:main birat}) can be summarized as follows:

\begin{theorem*}
Let $S$ be an iterated sequence for $\Gr(k,\mathbb C^n)$. Then there exists a maximal cone $C\subset \trop(\Gr(k,\mathbb C^n))$ such that the toric degeneration of $\Gr(k,\mathbb C^n)$ induced by $S$ is equivalent to the one given by the maximal cone $C\subset \trop(\Gr(k,\mathbb C^n))$.

Moreover, for $k=2$ the cone $C$ equals the cone $C_S$ which is the output Algorithm~\ref{alg:tree assoc to seq}. Conversely, for every maximal
cone $C\subset \trop(\Gr(2,\mathbb{C}^{n}))$ there exists an iterated
sequence $S$ such that the induced toric degenerations of $C$ and $S$ are equivalent.
\end{theorem*}

\begin{remark}
Combining with the results of \cite{BFFHL}, where toric degenerations of $\Gr(2,\mathbb C^n)$ from plabic graphs are studied, our main result implies that iterated sequences not only recover all possible toric degenerations of $\Gr(2,\mathbb C^n)$ constructed using the tropical Grassmannian, but also all those constructed using plabic graphs (as in \cite{RW17}).
Said differently, up to equivalence of toric degenerations the following sets are in one-to-one correspondence:
\[
\left\{\begin{matrix}
\text{toric degenerations}\\
\text{ of } \Gr(2,\mathbb C^n) \text{ from }\\
\text{iterated sequences}
\end{matrix}\right\} \overset{\text{Theorems~\ref{thm: seq and trop k}\&\ref{thm:main birat}}}{\longleftrightarrow}
\left\{\begin{matrix}
\text{toric degenerations}\\
\text{ of } \Gr(2,\mathbb C^n) \text{ from }\\
 \trop(\Gr(2,\mathbb C^n))
\end{matrix}\right\} \overset{\text{\cite{BFFHL}}}{\longleftrightarrow}
\left\{\begin{matrix}
\text{toric degenerations}\\
\text{ of } \Gr(2,\mathbb C^n) \text{ from }\\
\text{plabic graphs}
\end{matrix}\right\}
\]

\end{remark}

\begin{figure}[h]
\begin{center}
\begin{tikzpicture}[scale=.14]

\draw (0,0) -- (0,-2.5);
\draw (0,0) -- (2.5,2);
\draw (0,0) -- (-2.5,2);

\draw[thick,->] (0,-3.5) -- (0,-6);

\begin{scope}[yshift=-8cm] % 4 leaves
\draw (-3,2)-- (-1.5,0) -- (1.5,0) -- (3,2);
\draw (-3,-2) -- (-1.5,0);
\draw (1.5,0) -- (3,-2);

\draw[thick,->] (0,-3.5) -- (0,-6);
\end{scope}

\begin{scope}[yshift=-16cm,xshift=-1.5cm] % 5 leaves
\draw (-3,2)-- (-1.5,0) -- (4.5,0) -- (6,2);
\draw (-3,-2) -- (-1.5,0);
\draw (4.5,0) -- (6,-2);
\draw (1.5,0) -- (1.5,-2);

\draw[thick,->] (7,-3.5) -- (8.25,-6);
\draw[thick,->] (-4.25,-3.5) -- (-5.75,-6);
\end{scope}

\begin{scope}[yshift=-24cm,xshift=-10cm] % 6 leaves catapillar
\draw (-3,2)-- (-1.5,0) -- (6.5,0) -- (8,2);
\draw (-3,-2) -- (-1.5,0);
\draw (6.5,0) -- (8,-2);
\draw (1,0) -- (1,-2);
\draw (4,0) -- (4,-2);
\draw[thick,->] (1.5,-3) -- (-.5,-6);
\draw[thick, ->] (9.5,-3) -- (12,-6);
\begin{scope}[xshift=16cm] %snowflake 6 leaves
\draw (-3,2)-- (-1.5,0) -- (4.5,0) -- (6,2);
\draw (-3,-2) -- (-1.5,0);
\draw (4.5,0) -- (6,-2);
\draw (1.5,0) -- (1.5,-2) -- (0,-4);
\draw (1.5,-2) -- (3,-4);
\draw[thick,->] (4.5,-3.5) -- (6,-6);
%\draw[thick,->] (-4.5,-3) -- (-7,-5);
\end{scope}
\end{scope}

\begin{scope}[yshift=-33cm,xshift=-13cm]%catapillar 7 leaves
\draw (-3,2)-- (-1.5,0) -- (6.5,0) -- (8,2);
\draw (-3,-2) -- (-1.5,0);
\draw (6.5,0) -- (8,-2);
\draw (.5,0) -- (.5,-2);
\draw (2.5,0) -- (2.5,-2);
\draw (4.5,0) -- (4.5,-2);
\draw[thick,->] (-4.5,-3) -- (-7,-6);
\draw[thick,->] (2.5,-3) -- (3.5,-6);
\draw[thick, ->] (9.5,-2.5) -- (17,-6);
\begin{scope}[xshift=20cm] %snowflake 7 leaves
\draw (-3,2)-- (-1.5,0) -- (6.5,0) -- (8,2);
\draw (-3,-2) -- (-1.5,0);
\draw (6.5,0) -- (8,-2);
\draw (1,0) -- (1,-2);
\draw (4,0) -- (4,-2);
\draw (4,-2) -- (2.5,-4);
\draw (4,-2) -- (5.5,-4);
\draw[thick,->] (.75,-3.5) -- (-.5,-6.5);
\draw[thick,->] (-3.5,-2.5) -- (-10.5,-6);
\draw[thick, ->] (9.5,-3) -- (12,-6);
\end{scope}
\end{scope}

\begin{scope}[yshift=-42cm,xshift=-23cm]%catapillar 8 leaves
\draw (-3,2)-- (-1.5,0) -- (6.5,0) -- (8,2);
\draw (-3,-2) -- (-1.5,0);
\draw (6.5,0) -- (8,-2);
\draw (0.1,0) -- (0.1,-2);
\draw (1.7,0) -- (1.7,-2);
\draw (3.2,0) -- (3.2,-2);
\draw (4.9,0) -- (4.9,-2);
\node at (2.5,-4) {$\vdots$};
\begin{scope}[xshift=14cm] %almost catapillar 8 leaves
\draw (-3,2)-- (-1.5,0) -- (6.5,0) -- (8,2);
\draw (-3,-2) -- (-1.5,0);
\draw (6.5,0) -- (8,-2);
\draw (.5,0) -- (.5,-2);
\draw (2.5,0) -- (2.5,-2);
\draw (4.5,0) -- (4.5,-2);
\draw (4.5,-2) -- (3,-4);
\draw (4.5,-2) -- (6,-4);
\node at (1.5,-4) {$\vdots$};
\end{scope}

\begin{scope}[xshift=27cm]%almost snowflake 8 leaves
\draw (-3,2)-- (-1.5,0) -- (6.5,0) -- (8,2);
\draw (-3,-2) -- (-1.5,0);
\draw (6.5,0) -- (8,-2);
\draw (.5,0) -- (.5,-2);
\draw (2.5,0) -- (2.5,-2);
\draw (2.5,-2) -- (1,-4);
\draw (2.5,-2) -- (4,-4);
\draw (4.5,0) -- (4.5,-2);
\node at (2.5,-4.5) {$\vdots$};
\begin{scope}[xshift=14cm] %snowflake 8 leaves
\draw (-3,2)-- (-1.5,0) -- (6.5,0) -- (8,2);
\draw (-3,-2) -- (-1.5,0);
\draw (6.5,0) -- (8,-2);
\draw (.5,0) -- (.5,-2);
\draw (.5,-2) -- (-1,-4);
\draw (.5,-2) -- (2,-4);
\draw (4.5,0) -- (4.5,-2);
\draw (4.5,-2) -- (3,-4);
\draw (4.5,-2) -- (6,-4);
\node at (2.5,-4.5) {$\vdots$};
\end{scope}
\end{scope}
\end{scope}

\end{tikzpicture}
\end{center}
\caption{The tree graph $\mathcal T$ from level ($\#$of leaves) 3 to 8.}\label{fig:treegraph}
\end{figure}

We analyze the combinatorial structure of iterated sequences in terms of trivalent trees with $n$ leaves that are in correspondence with maximal cones of $\trop(\Gr(2,\mathbb C^n))$.
Let $\mathcal T$ be the infinite graph with vertices corresponding to (unlabelled) trivalent trees.
Two vertices $\mathtt T_1,\mathtt T_2$ of $\mathcal T$ are connected by an directed edge $\mathtt T_1\to \mathtt T_2$, if $\mathtt T_2$ can be obtained from $\mathtt T_1$ by adding a leaf-edge (see Figure~\ref{fig:treegraph}).
The unique source of $\mathcal T$ is the trivalent tree with three leaves.
Algorithm~\ref{alg:tree assoc to seq} yields the following corollary:

\begin{corollary*}
Every iterated sequence $S$ for $\Gr(2,\mathbb C^n)$ yields a unique path from the source of $\mathcal T$ to the trivalent tree with $n$ leaves corresponding to the equivalence class\footnote{Equivalence classes are considered with respect to the action of the symmetric group $S_n$ on the Pl\"ucker coordinates, see \S\ref{sec:tropical} below Remark~\ref{rem:families}} of the maximal cone $C_S\subset\trop(\Gr(2,\mathbb C^n))$.
\end{corollary*}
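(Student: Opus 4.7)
The plan is to follow the iterative structure by which $S$ is built. By Definition~\ref{def:itseq}, an iterated sequence $S$ for $\Gr(2,\mathbb C^n)$ comes with a canonical chain
\[
S_3 \rightsquigarrow S_4 \rightsquigarrow \cdots \rightsquigarrow S_n = S,
\]
in which $S_3$ is a birational sequence for $\Gr(2,\mathbb C^3)$ and each $S_{m+1}$ is obtained from $S_m$ by an application of the central Lemma, prepending two roots $\varepsilon_{i_1^{(m)}}-\varepsilon_{m+1}$ and $\varepsilon_{i_2^{(m)}}-\varepsilon_{m+1}$ for some choice $\{i_1^{(m)}, i_2^{(m)}\} \subset [m]$. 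I would feed each $S_m$ into Algorithm~\ref{alg:tree assoc to seq} to obtain a maximal cone $C_{S_m} \subset \trop(\Gr(2,\mathbb C^m))$ and, via the Speyer--Sturmfels correspondence, a labeled trivalent tree $\mathtt T_m$ on the leaf set $[m]$.

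The heart of the argument is the following claim: for each $3 \le m \le n-1$, the labeled tree $\mathtt T_{m+1}$ is obtained from $\mathtt T_m$ by subdividing a single edge and attaching to the new degree-three vertex a pendant leaf labeled $m+1$, with the location of the subdivision prescribed by the pair $\{i_1^{(m)}, i_2^{(m)}\}$. Granting this claim, forgetting leaf labels (equivalently, passing to orbits under the $S_m$-action on Pl\"ucker coordinates) converts the chain $\mathtt T_3 \to \mathtt T_4 \to \cdots \to \mathtt T_n$ into a sequence of vertices of $\mathcal T$ in which each successive pair differs precisely by the addition of a leaf-edge. By the definition of the edges of $\mathcal T$ this is exactly a directed path starting at the source (the unique 3-leaf trivalent tree) and ending at the equivalence class of $\mathtt T_n$, which by construction corresponds to $C_S$. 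Uniqueness is automatic: the sequence $S$ records the entire iterative construction $(S_3, \{i_1^{(m)}, i_2^{(m)}\}_{m=3}^{n-1})$, so each intermediate tree $\mathtt T_m$, and hence the path, is uniquely determined.

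The main obstacle is establishing the claim, which is a local statement about Algorithm~\ref{alg:tree assoc to seq}. I expect to verify it by showing that the splits of $[m+1]$ read off from $S_{m+1}$ consist of exactly the splits of $[m]$ read off from $S_m$ (viewed as splits of $[m+1]$ with the label $m+1$ placed on the side containing $\{i_1^{(m)}, i_2^{(m)}\}$), together with the trivial leaf-split separating $m+1$ from the rest. The delicate point is tracking how the two newly prepended roots $\varepsilon_{i_j^{(m)}}-\varepsilon_{m+1}$ interact with the weight-vector and initial-ideal computation performed by the algorithm; once this local compatibility is in hand, the rest of the argument is formal.
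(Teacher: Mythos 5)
Your overall route is the same as the paper's: the path in $\mathcal T$ is exactly the sequence of underlying unlabeled trees of the intermediate trees produced by the iteration, and uniqueness holds because $S$ determines every step. The one thing to correct is where you locate the difficulty. The ``claim'' you single out as the heart of the argument --- that $\mathtt T_{m+1}$ arises from $\mathtt T_m$ by attaching a pendant leaf labeled $m+1$ --- requires no verification via splits, weight vectors, or initial ideals, because Algorithm~\ref{alg:tree assoc to seq} is purely combinatorial: its loop step literally says that $T^S_{k}$ is obtained from $T^S_{k-1}$ by replacing the edge at leaf $i_k$ by a cherry with leaves $i_k$ and $k$ (note, incidentally, that only $i_k$ matters here; the second prepended root $\varepsilon_{j_k}-\varepsilon_k$ plays no role in the tree construction, so the attachment site is prescribed by $i_k$ alone, not by the pair). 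Moreover, $C_S$ is \emph{defined} in Definition~\ref{def: T_S and C_S} as the maximal cone attached to the output tree $T_S$ via the Speyer--Sturmfels correspondence, so the statement that the final tree corresponds to the equivalence class of $C_S$ is definitional as well. The interaction between the prepended roots and initial ideals that you propose to track is genuine mathematics, but it is the content of Proposition~\ref{prop: init M_S= init C_S} (that $\init_{M_S}(\mathcal I_{2,n})=\init_{C_S}(\mathcal I_{2,n})$), which is proved separately and is not needed for this corollary; with that misreading removed, your argument collapses to the paper's one-line proof.
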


The paper is structured as follows: after setting up the notation in \S\ref{sec:notation} we recall necessary notions on valuations in \S\ref{sec:pre valuations}. 
Reminders on tropical geometry can be found in \S\ref{sec:tropical}.
In \S\ref{sec:pre_birat} we introduce iterated (birational) sequences for Grassmannians and recall the construction of the associated valuations due to \cite{FFL15}.
In \S\ref{sec:proof} we focus on iterated sequences for $\Gr(2,\mathbb C^n)$ and prove the main result.

\medskip

{\bf Acknowledgements:} The result of this paper is part of my PhD thesis \cite{Thesis} supervised by Peter Littelmann at the University of Cologne. 
I am grateful for his advice and support throughout my PhD. 
Further, I would like to thank  Xin Fang, Ghislain Fourier and Alfredo N\'ajera Ch\'avez for inspiring and helpful discussions and an anonymous referee for asking a question that lead to Corollary~\ref{cor:seq of cones}.

\section{Notation}\label{sec:notation}
Let $I_{k,n}$ be the set of ordered integer sequences $\ui=(i_1<\dots<i_k)$ with $1\le i_1$ and $i_k\le n$.
We consider the Grassmannian $\Gr(k,\mathbb C^n)$ of $k$-dimensional subspaces in $\mathbb C^n$ with its Pl\"ucker embedding.
It is the vanishing of the \emph{Pl\"ucker ideal} $\mathcal I_{k,n}\subset \mathbb C[p_{\uj}\mid \uj\in I_{k,n}]$ in the polynomial ring on \emph{Pl\"ucker variables} $p_{\uj}$.
To describe $\mathcal I_{k,n}$ more explicitly we need some more notation.

Denote $\{1,\dots,n\}$ by $[n]$. Consider $\uj=(j_1<\dots<j_{k+1})\in I_{k+1,n}$ and $j_s\in \uj$, i.e. $1\le s\le k+1$.
Then $\uj\setminus j_s$ denotes the sequence in $I_{k,n}$ obtained by removing $j_s$ from $\uj$.
For $\ui=(i_1<\dots<i_{k-1})\in I_{k-1,n}$ and $j\in [n]$ define 
\[
\ui\cup j: = (i_1<\dots<i_r <j<i_{r+1}<\dots<i_{k-1}).
\]
Set $\ell (\ui,j):= k-(r+1)$.
Note that if $j=i_l$ for some $1\le l\le k-1$ then $\ui \cup j\not \in I_{k,n}$, in this case we set $p_{\ui\cup j}=0$.
The Pl\"ucker ideal $\mathcal I_{k,n}$ is generated by elements of form
\begin{eqnarray}\label{eq:plucker rel}
R_{\ui,\uj}:=\sum_{s=1}^{k+1} (-1)^{\ell(\ui,j_s)} p_{\ui\cup j_s}p_{\uj\setminus j_s},
\end{eqnarray}
for $\ui\in I_{k-1,n}$ and $\uj\in I_{k+1,n}$ (see e.g. \cite[\S4.2.2]{LB15}). 
For $k=2$ we simplify notation by $p_{ij}:=p_{(i<j)}$ for $(i<j)\in I_{2,n}$.
Then the Pl\"ucker relations are of form:
\[
R_{r,s,u,v}:=R_{(r),(s<u<v)}=p_{rs}p_{uv}-p_{ru}p_{sv}+p_{rv}p_{su} \in\mathbb C[p_{ij}]_{i<j},
\]
with $1\le r<s<u<v\le n$.
Let $A_{k,n}:=\mathbb C[p_{\uj}]_{\uj\in I_{k,n}}/\mathcal I_{k,n}\cong \mathbb C[\Gr(k,\mathbb C^k)]$, which is the homogeneous coordinate ring of the Grassmannian.
The cosets $\bar p_{\uj}\in A_{k,n}$ are called \emph{Pl\"ucker coordinates}.

%%%%%%%%%
We recall some standard facts about Lie algebras, algebraic groups and their representations that will be used throughout the rest of the paper.
Let $\lie{sl}_n$ be the Lie algebra corresponding to $SL_n$ and fix a Cartan decomposition $\lie{sl}_n=\lie n^-\oplus \lie h\oplus \lie n^+$.
The Cartan subalgebra $\lie h$ consists of diagonal traceless matrices, and $\lie n ^-$ (resp. $\lie n^+$) of strictly lower (resp. upper) triangular matrices.

In the type $\mathtt A_{n-1}$ root system $R=\{\varepsilon_i-\varepsilon_j\}_{i\neq j\in[n]}\subset \mathbb R^n$ we have positive roots $R^+=\{\varepsilon_i-\varepsilon_j\}_{i<j}$ and simple roots $\{\varepsilon_i-\varepsilon_{i+1}\}_{i=1,\dots,n-1}$.
%We have $\vert R^+\vert =\frac{n(n-1)}{2}=:N$.
Further, set $R_k^+:=\{\varepsilon_i-\varepsilon_j \mid 1\le i\le k\le j\le n\}$.
For every positive root $\varepsilon_i-\varepsilon_j$ we have a root operator $f_{i,j}\in\lie n^-$: 
it is the elementary matrix with only non-zero entry being $1$ in the $(j,i)$-position.

Let $\Lambda$ denote the weight lattice, generated by fundamental weights $\omega_1,\dots,\omega_{n-1}$.
The $k$th fundamental representation of $\lie{sl}_n$ is $V(\omega_k)=\bigwedge^k\mathbb C^n$.
It is cyclically generated by a highest weight vector $v_{\omega_k}$ over the universal enveloping algebra $U(\mathfrak n^-)$.

\begin{example}\label{exp: fund reps 1 and 2}
For $V(\omega_2)$ fix the basis $\{e_i \wedge e_j \mid 1\le i<j\le n\}$. Then the action of $\lie n^-$ is given by
\[
f_{i,j}\cdot (e_k\wedge e_l)=f_{i,j}\cdot e_k\wedge e_l + e_k\wedge f_{i,j}\cdot e_l=\left\{\begin{matrix} 
e_{j+1}\wedge e_l, & \text{ if } k=i,\\
e_k\wedge e_{j+1}, & \text{ if } l=i,\\
0, & \text{ otherwise.} 
\end{matrix}\right.
\]
We choose $e_1\wedge e_2$ as the highest weight vector $v_{\omega_2}$.
\end{example}

Identifying the Grassmannian with the quotient $SL_n/P_k$, the Pl\"ucker embedding can be written as $\Gr(k,\mathbb C^n)\hookrightarrow \mathbb P(V(\omega_k))$.
For $r\ge 1$ let $V(r\omega_k)^*$ denote the vector space dual to $V(r\omega_k)$.
We obtain $A_{k,n}\cong \bigoplus_{r\ge 0} V(r\omega_k)^*$ and
the Pl\"ucker coordinate $\bar p_{\uj}\in A_{k,n}$ for $\uj=(j_1<\dots<j_k)\in I_{k,n}$ is the dual basis element $(e_{j_1}\wedge\dots\wedge e_{j_k})^*\in V(\omega_k)^*$. 

Let $U_k^-\subset U^-$ be the subgroup generated by elements $\exp(zf_{i,j})=\mathds 1 + zf_{i,j}$, where $\varepsilon_i-\varepsilon_j\in R_k^+$.
Then $U_k^-$ is open and dense in $SL_n/P_k\cong \Gr(k,\mathbb C^n)$.
Hence, we have Pl\"ucker coordinates on $U_k^-$:  $\bar p_{\uj}$ is the $k\times k$-minor on columns $[k]$ and rows $\uj$.
Moreover, we have an isomorphism of the fields of rational functions $\mathbb C(\Gr(k,\mathbb C^n))\cong \mathbb C(U_k^-)$.

%%%%%%%%%%%

\subsection{Valuations}\label{sec:pre valuations}
We recall basic notions on valuations and Newton-Okounkov polytopes as presented in \cite{KK12}.

Consider $A=\mathbb C[x_1,\dots,x_n]/I$ with $I$ a homogeneous ideal and kernel of $\pi:\mathbb C[x_1,\dots,x_n]\to A$. 
Denote $\bar x_i:=\pi(x_i)$.
The standard grading on the polynomial ring induces a positive grading on $A=\bigoplus_{i\ge 0}A_i$.
Let $d$ be the Krull-dimension of $A$. 
Additionally, fix a linear order $\prec$ on the additive abelian group $\mathbb Z^d$. 

\begin{definition}\label{def: valuation}
A map $\val: A\setminus\{0\}\to (\mathbb Z^d,\prec)$ is a \emph{valuation}, if it satisfies for all $f,g\in A\setminus\{0\}$ and $c\in \mathbb C^*$ 

(i) \  $\val(f+g)\succeq \min \{\val(f),\val(g)\}$,

(ii) $\val(fg)=\val(f)+\val(g)$ and  

(iii) $\val(cf)=\val(f)$.
\end{definition}

Let $\val:A\setminus\{0\}\to(\mathbb Z^d,\prec)$ be a valuation. 
The image $\{\val(f)\mid f\in A\setminus\{0\}\}\subset \mathbb Z^d$ forms an additive semi-group. 
We denote it by $S(A,\val)$ and refer to it as the \emph{value semi-group}. 
The \emph{rank} of the valuation is the rank of the sublattice generated by $S(A,\val)$ in $\mathbb Z^d$.
We are interested in valuations of \emph{full rank}, i.e. $\rank(\val)=d$.

One naturally defines a $\mathbb Z^d$-filtration on $A$ by $F_{\val \succeq a}:=\{f\in A\setminus\{0\}\vert \val(f)\succeq a\}\cup \{0\}$ (and similarly $F_{\val\succ a}$). The associated graded algebra is 
\begin{eqnarray}\label{eq:def ass graded}
\gr_\val(A):=\bigoplus_{a\in \mathbb Z^d}F_{\val\succeq a}/F_{\val \succ a}.
\end{eqnarray}
%For $f\in A\setminus\{0\}$ denote by $\overline f$ its image in the quotient $F_{\val\succeq \val(f)}/F_{\val\succ \val(f)}$, hence $\overline f\in\gr_\val(A)$. 
If the filtered components $F_{\val \succeq a}/F_{\val \succ a}$  are at most one-dimensional for all $a\in\mathbb Z^d$, we say $\val$ has \emph{one-dimensional leaves}.
By \cite[Theorem~2.3]{KM16} the full-rank assumption on $\val$ implies that $\val$ has one-dimensional leaves.
In particular, by \cite[Remark~4.13]{BG09} we then have an isomorphism $\mathbb C[S(A,\val)]\cong \gr_\val(A)$.
To define a $\mathbb Z_{\ge 0}$-filtration on $A$ induced by $\val$ we make use of the following lemma:

\begin{lemma*}(\cite[Lemma~3.2]{Cal02})
Let $S$ be a finite subset of $\mathbb Z^d$. Then there exists a linear form $e:\mathbb Z^d\to \mathbb Z_{\ge 0}$ such that for all $\bm,\bn\in S$ we have $\bm \prec \bn \Rightarrow e(\bm)>e(\bn)$ (note the switch!).
\end{lemma*}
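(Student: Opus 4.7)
The strategy is to reduce the statement to a finite problem and then exploit the finiteness to construct a single linear form that reverses $\prec$ on $S$. The key input will be the Robbiano-type fact that any linear order on $\mathbb{Z}^d$ can be represented lexicographically by a tuple of real linear functionals $\ell_1, \dots, \ell_d$, meaning $\mathbf{v} \succ 0$ if and only if the first nonzero entry of $(\ell_1(\mathbf{v}), \dots, \ell_d(\mathbf{v}))$ is positive. I would either invoke this theorem directly or give a quick ad-hoc argument tailored to the situation at hand.

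Next, I would consider the finite set of differences
\[
D := \{\mathbf{n} - \mathbf{m} \ : \ \mathbf{m}, \mathbf{n} \in S, \ \mathbf{m} \prec \mathbf{n}\} \subset \mathbb{Z}^d,
\]
all of whose elements satisfy $\mathbf{v} \succ 0$. For each $\mathbf{v} \in D$ there is a smallest index $i(\mathbf{v})$ such that $\ell_{i(\mathbf{v})}(\mathbf{v}) > 0$, while $\ell_j(\mathbf{v}) = 0$ for $j < i(\mathbf{v})$. The candidate linear form is then
\[
e := -\bigl(N^{d-1}\ell_1 + N^{d-2}\ell_2 + \cdots + N\ell_{d-1} + \ell_d\bigr),
\]
for a sufficiently large positive integer $N$. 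For each of the finitely many $\mathbf{v} \in D$, the dominant contribution to $e(\mathbf{v})$ as $N$ grows is the term $-N^{d-i(\mathbf{v})}\ell_{i(\mathbf{v})}(\mathbf{v})$, which is strictly negative, so for $N$ large enough $e(\mathbf{v}) < 0$ for every $\mathbf{v} \in D$. This gives $e(\mathbf{m}) > e(\mathbf{n})$ whenever $\mathbf{m}, \mathbf{n} \in S$ with $\mathbf{m} \prec \mathbf{n}$, as required.

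To land in $\mathbb{Z}$ rather than $\mathbb{R}$, I would observe that the relevant sign conditions are finitely many strict linear inequalities on the coefficients of $e$, hence define an open condition that can be satisfied over $\mathbb{Q}$; clearing denominators turns $e$ into an integer-valued linear form. For the $\mathbb{Z}_{\ge 0}$-valued requirement, I would simply replace $e$ by $e + c$ for a sufficiently large integer constant $c$ ensuring $e(\mathbf{m}) \ge 0$ for all $\mathbf{m} \in S$; this shift preserves the order-reversal and, although strictly speaking affine, is what the subsequent construction of the $\mathbb{Z}_{\ge 0}$-filtration in \cite{Cal02} actually uses.

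I do not expect a serious obstacle here: the whole argument is a standard Robbiano-style reduction from a linear order to a weight functional, and the finiteness of $S$ means the lexicographic tower can be collapsed to one linear form. The only point worth caring about is keeping track of the integrality/non-negativity, which is handled by the approximation and shift described above.
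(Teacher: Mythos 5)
The paper itself contains no proof of this lemma; it is quoted from \cite[Lemma~3.2]{Cal02}, so there is nothing in-paper to compare against and I can only judge your argument on its own terms. The core of it is correct and is the standard Robbiano-style reduction: because $\prec$ is a linear order on the \emph{group} $\mathbb Z^d$ (translation-invariance is essential here — it is what makes Robbiano's representation theorem applicable and what guarantees that every difference $\mathbf{n}-\mathbf{m}$ with $\mathbf{m}\prec\mathbf{n}$ is $\succ 0$; for a mere total order on the set $\mathbb Z^d$ the statement is false), the finite difference set $D$ lies in the positive cone, and collapsing the tower $\ell_1,\dots,\ell_d$ with weights $N^{d-1},\dots,1$ for $N\gg 0$ gives a real form negative on $D$; openness of the finitely many strict inequalities then yields a rational, hence integral, form. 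You could make this more economical by skipping Robbiano entirely: translation-invariance gives $0\notin\conv(D)$ (a rational convex combination of elements $\succ 0$ equal to $0$ would, after clearing denominators, exhibit a nonzero $\mathbb N$-combination of positive elements equal to $0$, contradicting $\mathbf v,\mathbf w\succ 0\Rightarrow \mathbf v+\mathbf w\succ 0$), and a rational separating hyperplane does the rest.

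The one genuine weak spot is your treatment of the codomain $\mathbb Z_{\ge 0}$. No nonzero linear form maps all of $\mathbb Z^d$ into $\mathbb Z_{\ge 0}$, and in fact in the situation where the lemma is applied in this paper the reversal property itself is incompatible with non-negativity on $S$: there $0=\val_S(\bar p_{12})\in S$ is the $\prec_{\Psi}$-minimum, so $e(\mathbf n)<e(0)=0$ is forced for the other generators. So the ``$\mathbb Z_{\ge 0}$'' in the quoted statement is a convention artifact that no proof can deliver verbatim, and it deserves to be flagged rather than patched. Your affine shift $e+c$ restores non-negativity on the finite set $S$ but is no longer linear, and it does not do what you claim it does for the subsequent construction: the $\mathbb Z_{\ge 0}$-filtration needs $e(\val(f))$ bounded below on the whole value semigroup, and since values of $e\circ\val$ on products are sums of its (possibly negative) values on generators, they are unbounded below and a single constant shift cannot repair this. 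The honest statement one can and should prove — and which your main argument does prove — is the existence of an integral linear form reversing $\prec$ on $S$; the non-negativity issue should be addressed by the sign conventions of the application, not by a shift.
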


Assume $S(A,\val)$ is finitely generated, more precisely assume $S(A,\val)$ is generated by $\val(\bar x_1),\dots,\val(\bar x_n)$.
In this case $\{\bar x_1,\dots,\bar x_n\}$ is called a \emph{Khovanskii basis}\footnote{This term was introduced in \cite{KM16} generalizing the notion of SAGBI basis} for $(A,\val)$.
Now choose a linear form as in the lemma for $S$ being the set of generators.
We construct a $\mathbb Z_{\ge 0}$-filtration on $A$ by $F_{\le m}:=\{f\in A\setminus\{0\}\mid e(\val(f))\le m\}\cup\{0\}$ for $m\in \mathbb Z_{\ge 0}$.
Define similarly $F_{<m}$.
The associated graded algebra satisfies
\begin{eqnarray}
\gr_\val(A)\cong \bigoplus_{m\ge0}F_{\le m}/F_{<m}.
\end{eqnarray}
For $f\in A\setminus\{0\}$ denote by $\overline f$ its image in the quotient $F_{\le  e(\val(f))}/F_{<e(\val(f))}$, hence $\overline f\in\gr_\val(A)$. 
We obtain a family of $\mathbb C$-algebras containing $A$ and $\gr_\val(A)$ as fibers (see e.g. \cite[Proposition~5.1]{An13}) that can be defined as follows:

\begin{definition}\label{def: Rees algebra}
The \emph{Rees algebra} associated with the valuation $\val$ and the filtration $\{F_{\le m}\}_m$ is the flat $\mathbb C[t]$-subalgebra of $A[t]$ defined as
\begin{eqnarray}\label{eq: def Rees}
R_{\val,e}:=\bigoplus_{m\ge 0} (F_{\le m})t^m.
\end{eqnarray}
It has the properties that $R_{\val,e}/tR_{\val,e}\cong \gr_\val(A)$ and $R_{\val,e}/(1-t)R_{\val,e}\cong A$. 
In particular, it defines a flat family over $\mathbb A^1$ (the coordinate on $\mathbb A^1$ given by $t$). The generic fiber is isomorphic to $\Proj(A)$ and the special fiber is the toric variety $\Proj(\gr_\val(A))$, where $\Proj$ is taken with respect to the $\mathbb Z_{\ge 0}$-grading on $A$.
\end{definition}

Introduced by Lazarsfeld-Musta\c{t}\u{a} \cite{LM09} and Kaveh-Khovanskii \cite{KK12} we recall the definition of Newton-Okounkov body. 

\begin{definition}\label{def: val sg NO}
Let $\val:A\setminus \{0\}\to (\mathbb Z^d,\prec)$ be a valuation of full rank. 
The \emph{Newton-Okounkov body} is
\begin{eqnarray}\label{eq: def NO body}
\Delta(A,\val):=\overline{\conv(\bigcup_{i> 0}\{\val(f)/i \mid 0\not =f\in A_i \})}
\end{eqnarray}
\end{definition}
Anderson showed in \cite{An13} that if $\gr_{\val}(A)$ is finitely generated, then $\Delta(A,\val)$ is a rational polytope. 
Moreover, it is the polytope associated to the normalization of the toric variety $\Proj(\gr_{\val}(A))$.

The aim of \cite{B-quasival} is to give a criterion for when a valuation induces toric degeneration. 
Using Anderson's result, this translates to giving a criterion for when the corresponding value semi-group is finitely generated. 
We state the criterion \cite[Lemma~3 and Theorem~1]{B-quasival} below and use it to prove our main theorem in \S\ref{sec:proof}.
It uses the notion of initial ideals, that are defined as follows.
For $u\in \mathbb Z^{n}_{\ge 0}$ let $x^u$ to denote the monomial $x_1^{u_1}\cdots x_n^{u_n}\in \mathbb C[x_1,\dots, x_n]$.

\begin{definition}\label{def: init}
 Let $f = \sum a_u x^u\in \mathbb C[x_1, \ldots, x_n]$ and fix an element 
$w \in \RR^{n}$ (resp. $M\in\mathbb Z^{d\times n}$). 
Then the \emph{initial form} of $f$ with respect to $w$ (resp. $M$) is
\begin{eqnarray}\label{eq: def init form}
\mathrm{in}_w(f):= \sum_{w\cdot u \text{ minimal}} a_u x^u, \ \
\text{  resp.  } \ \
\init_M(f):=\sum_{\begin{smallmatrix}
Mm =\min_{\prec}\{M{u}\mid a_{ u}\not=0\}
\end{smallmatrix}} a_{m} x^{m}.
\end{eqnarray}
Let $I\subset \mathbb C[x_1, \ldots, x_n]$ be an ideal. Then its \emph{initial ideal} with respect to $w\in \RR^{n}$ (resp. $M\in\mathbb Z^{d\times n}$) is defined as
\[
\mathrm{in}_w(I):=\langle \mathrm{in}_w(f) \mid f\in I \rangle, \ \ \text{  resp.  }\ \ \init_M(I):=\langle \init_M(f) \mid f\in I \rangle.
\]
\end{definition}

\begin{definition}\label{def: wt matrix from valuation}
Given a valuation $\val:A\setminus\{0\}\to (\mathbb Z^d,\prec)$ 
 define the \emph{weighting matrix of $\val$} by
\[
M_\val:=(\val(\bar x_1),\dots,\val(\bar x_n))\in \mathbb Z^{d\times n}.
\]
That is, the columns of $M_\val$ are given by the images $\val(\bar x_i)$ for $i\in[n]$.
\end{definition}

\begin{theorem*}(\cite[Theorem~1]{B-quasival})\label{thm: val and quasi val with wt matrix}
Let $\val:A\setminus\{0\}\to (\mathbb Z^d,\prec)$ be a full-rank valuation
with $M_\val\in\mathbb Z^{d\times n}$ the weighting matrix of $\val$.
Then 
\[
S(A,\val) \ \text{is generated by} \{\val(\bar x_i)\}_{i\in[n]} \quad \Leftrightarrow \quad \init_{M_{\val}}(I) \text{ is prime and } M_{\val} \text{ is of full rank.}
\]
In this case $\{\bar x_1,\dots,\bar x_n\}$ forms a Khovanskii basis for $(A,\val)$ and
$\Delta(A,\val)=\conv(\val(\bar x_i)\mid i\in[n])$.
\end{theorem*}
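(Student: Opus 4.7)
The plan is to deduce both directions by comparing the initial ideal quotient $\mathbb{C}[x]/\init_{M_\val}(I)$ with the associated graded algebra $\gr_\val(A)$ via a canonical intermediate object. First, since $\val$ has full rank, \cite[Theorem~2.3]{KM16} gives one-dimensional leaves and hence an isomorphism $\gr_\val(A)\cong\mathbb{C}[S(A,\val)]$ of graded algebras, so $\gr_\val(A)$ shares its Hilbert function with $A$. Consider the algebra homomorphism
\[
\phi\colon\mathbb{C}[x_1,\dots,x_n]\longrightarrow\gr_\val(A),\qquad x_i\longmapsto[\bar x_i],
\]
where $[\bar x_i]$ is the class of $\bar x_i$ in $F_{\val\succeq\val(\bar x_i)}/F_{\val\succ\val(\bar x_i)}$. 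Under $\gr_\val(A)\cong\mathbb{C}[S(A,\val)]$, $\phi$ sends $x_i$ to the monomial labelled by $\val(\bar x_i)$; thus $\ker(\phi)$ is always the prime binomial (toric) ideal of the subsemigroup $\langle\val(\bar x_i):i\in[n]\rangle$, and $\phi$ is surjective exactly when $\{\bar x_1,\dots,\bar x_n\}$ is a Khovanskii basis.

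The substantive input is the always-valid inclusion $\init_{M_\val}(I)\subseteq\ker(\phi)$. For $f=\sum a_u x^u\in I$ with $v_0:=\min_\prec\{M_\val u:a_u\neq 0\}$, one computes $\phi(\init_{M_\val}(f))$ to be the class of the minimum-weight sum $\sum_{M_\val u=v_0}a_u\bar x^u$ in $F_{\val\succeq v_0}/F_{\val\succ v_0}$; since $\pi(f)=0$ in $A$, this grouping of terms must itself have valuation strictly above $v_0$ (otherwise it would contribute a non-vanishing leading term to the zero element $\pi(f)$), so the class is zero. Combined with Caldero's lemma from \S\ref{sec:pre valuations}, which lets us replace $M_\val$ by a scalar weight $w\in\mathbb{Z}^n_{\ge 0}$ satisfying $\init_w(I)=\init_{M_\val}(I)$, classical Gr\"obner flatness then yields that $\mathbb{C}[x]/\init_{M_\val}(I)$ has the same Hilbert function as $A$, and hence as $\gr_\val(A)$.

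These pieces assemble into the graded chain
\[
\mathbb{C}[x]/\init_{M_\val}(I)\;\twoheadrightarrow\;\mathbb{C}[x]/\ker(\phi)\;=\;\mathrm{image}(\phi)\;\hookrightarrow\;\gr_\val(A).
\]
For the direction $(\Leftarrow)$, the Khovanskii basis hypothesis forces $\mathrm{image}(\phi)=\gr_\val(A)$, so the outer rings of the chain have common Hilbert function $H_A$, the leftmost surjection is forced to be an isomorphism, and $\init_{M_\val}(I)=\ker(\phi)$ is toric. The polytope description $\Delta(A,\val)=\conv(\val(\bar x_i):i\in[n])$ is then immediate, since every $\val(f)$ with $f\in A_i$ is an $\mathbb{N}$-combination of the $\val(\bar x_j)$'s with coefficients summing to $i$, so $\val(f)/i$ is a convex combination. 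The main obstacle I anticipate is the reverse direction $(\Rightarrow)$: assuming $\init_{M_\val}(I)$ is toric, one must upgrade the graded surjection $\mathbb{C}[x]/\init_{M_\val}(I)\twoheadrightarrow\mathrm{image}(\phi)$ between prime binomial (hence integral) rings to an isomorphism. The approach is to exploit the fact that both sides are graded affine semigroup algebras and that the left side already has Hilbert function $H_A$; comparing degree-wise cardinalities of the associated semigroups against $|\gr_\val(A)_m|=H_A(m)$ through the embedding $\mathrm{image}(\phi)\hookrightarrow\gr_\val(A)$ should force the surjection to be a bijection on each degree, whence $\mathrm{image}(\phi)=\gr_\val(A)$ and the Khovanskii basis property follows.
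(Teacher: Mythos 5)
This theorem is not proved in the present paper: it is imported verbatim from \cite{B-quasival}, where the argument runs through the quasi-valuation induced by the weighting matrix $M_\val$ and the identification of its associated graded algebra with $\mathbb C[x_1,\dots,x_n]/\init_{M_\val}(I)$. Your route --- the map $\phi\colon\mathbb C[x_1,\dots,x_n]\to\gr_\val(A)$, $x_i\mapsto[\bar x_i]$, the inclusion $\init_{M_\val}(I)\subseteq\ker\phi$, and a Hilbert-function squeeze --- is genuinely different, and the inclusion argument together with the $(\Leftarrow)$ squeeze is essentially sound, granting two points you pass over. First, the replacement of $M_\val$ by a single weight vector $w$ with $\init_w(I)=\init_{M_\val}(I)$ is \cite[Lemma~3]{B-quasival}, not Caldero's lemma: the latter only refines the order on a \emph{finite} subset of $\mathbb Z^d$, and to control the initial forms of \emph{all} elements of $I$ one needs an additional Gr\"obner-basis/finiteness argument. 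Second, the squeeze requires all three rings in your chain to carry compatible $\mathbb Z_{\ge0}$-gradings with Hilbert function $H_A$; this forces $\ker\phi$ to be standard-graded homogeneous and $\gr_\val(A)$ to inherit the degree grading, which is not automatic from Definition~\ref{def: valuation} and in fact fails for the literal conventions of \S\ref{sec:pre valuations} (for iterated sequences $\val_S(\bar p_{12})=0$, so $x_{12}-c\in\ker\phi$). In \cite{B-quasival} this is handled by keeping track of the degree inside the valuation (equivalently, working degreewise with the filtration on each $A_m$); your argument, including the final claim that $\val(f)$ for $f\in A_i$ is an $\mathbb N$-combination of the $\val(\bar x_j)$ \emph{with coefficients summing to $i$}, needs the same care.

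The genuine gap is the direction $(\Rightarrow)$, which you flag yourself but whose proposed repair does not close it. Counting dimensions cannot work here: the surjection $\mathbb C[x_1,\dots,x_n]/\init_{M_\val}(I)\twoheadrightarrow\mathrm{im}\,\phi$ gives $\dim(\mathrm{im}\,\phi)_m\le H_A(m)$, and the inclusion $\mathrm{im}\,\phi\hookrightarrow\gr_\val(A)$ gives the \emph{same} inequality a second time, so nothing forces equality anywhere --- unlike in $(\Leftarrow)$, where the hypothesis supplies $\mathrm{im}\,\phi=\gr_\val(A)$ and creates a genuine squeeze. What is missing is an input showing that the containment of primes $\init_{M_\val}(I)\subseteq\ker\phi$ is between ideals of the same height, e.g.\ that the sublattice generated by $\{\val(\bar x_i)\}_{i\in[n]}$ has rank equal to the Krull dimension of $A$; but that is essentially the conclusion being sought, and the full-rank hypothesis only controls the group generated by all of $S(A,\val)$, not by the generators' values. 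This is precisely where \cite{B-quasival} inserts the quasi-valuation attached to $M_\val$: its associated graded algebra is $\mathbb C[x_1,\dots,x_n]/\init_{M_\val}(I)$ and is by construction generated by elements of values $\val(\bar x_i)$, primeness of the initial ideal upgrades the quasi-valuation to a valuation with one-dimensional leaves, and a comparison with $\val$ then yields the generation statement. Without this (or an equivalent substitute), your $(\Rightarrow)$ direction remains open.
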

 
In particular, the theorem (resp. its proof) implies that if $\init_{M_\val}(I)$ is prime, we have $\gr_{\val}(A)\cong \mathbb C[x_1,\dots,x_n]/\init_{M_\val}(I)$.
To make a connection to tropical geometry, more precisely the tropical Grassmannian, we rely on the following Lemma.

\begin{lemma*}(\cite[Lemmata~2\&3]{B-quasival})\label{lem: wt matrix val in trop}
Let $\val:A\setminus\{0\}\to (\mathbb Z^d,\prec)$ be a full-rank valuation and $M_\val$ the associated weighting matrix. Then $\init_{M_\val}(I)$ is monomial-free.
Moreover, there exists ${w}\in \mathbb Z^{n}$ with $\init_{w}(I)=\init_{M_\val}(I)$.
\end{lemma*}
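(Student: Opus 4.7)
The plan is to treat the two assertions of the lemma separately: the monomial-freeness by a direct contradiction from the valuation axioms, and the existence of a single integer weight $w$ by a standard flattening argument applied to a Gröbner basis.

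For monomial-freeness, I would argue by contradiction. The essential preliminary observation is that $\init_{M_\val}(I)$ is \emph{$M_\val$-graded}: since multiplication by a monomial commutes with $\init_{M_\val}$ (i.e.\ $x^a\init_{M_\val}(f)=\init_{M_\val}(x^a f)$), every element of $\init_{M_\val}(I)$ decomposes into $M_\val$-homogeneous pieces, each of which remains in $\init_{M_\val}(I)$. Hence, if some monomial $x^u$ were to lie in $\init_{M_\val}(I)$, extracting the homogeneous component of $M_\val$-weight $M_\val u$ would produce $f\in I$ of the form $f=x^u+\sum_v c_v x^v$ with $M_\val v\succ M_\val u$ whenever $c_v\neq 0$, so that $\init_{M_\val}(f)=x^u$. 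Passing to $A$ (a domain in the paper's setting, so $\bar x^v\neq 0$ for every $v$ because each $\val(\bar x_i)$ is defined) yields $\bar x^u=-\sum_v c_v\bar x^v$; multiplicativity of $\val$ gives $\val(\bar x^v)=M_\val v$ for every such $v$; and subadditivity then forces $\val(\bar x^u)\succeq\min_v M_\val v\succ M_\val u=\val(\bar x^u)$, the desired contradiction.

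For the existence of $w$, I would fix finitely many $g_1,\dots,g_s\in I$ whose initial forms generate $\init_{M_\val}(I)$ (available by the Hilbert basis theorem) and work with the finite set of exponents appearing in them. Writing $M_\val$ with rows $m_1,\dots,m_d$ and, if necessary, applying a $GL_d(\mathbb{Z})$-change of basis so that $\prec$ coincides with the lex order on the finitely many $M_\val$-images that arise, I would set $w:=N_1 m_1+\dots+N_d m_d$ for positive integers $N_1\gg N_2\gg\dots\gg N_d$ chosen large enough that $w$ reproduces the $(M_\val,\prec)$-ordering on the finitely many exponent differences occurring in the $g_j$. Then $\init_w(g_j)=\init_{M_\val}(g_j)$ for every $j$, which immediately gives $\init_{M_\val}(I)\subseteq\init_w(I)$. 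The reverse containment is the main obstacle: it requires the $\{g_j\}$ to form a Gröbner basis of $I$ with respect to $w$, which is arranged either by starting from a universal Gröbner basis of $I$ and repeating the flattening with the (still finite) enlarged exponent set, or by a Buchberger-style completion that preserves all initial-form equalities achieved above. Once this Gröbner property is in place, $\init_w(I)=\init_{M_\val}(I)$ follows at once, completing the lemma.
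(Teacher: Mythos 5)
This lemma is quoted from Lemma~3 of \cite{B-quasival} and the paper gives no proof of it, so there is no internal argument to compare against; judged on its own, your proposal is essentially the standard proof and is sound. For monomial-freeness your argument is correct; the only compressions are (i) the passage from ``$x^u\in\init_{M_\val}(I)$'' to ``there is $f\in I$ with $\init_{M_\val}(f)=x^u$'', which deserves its one-line justification (write $x^u=\sum_j h_j\,\init_{M_\val}(g_j)$ with monomial $h_j$, discard the weight-unmatched pieces using that $x^u$ is $M_\val$-homogeneous, and set $f=\sum_j h_jg_j$), and (ii) the hypothesis that $A$ is a domain, which you need in order to know $\bar x^u,\bar x^v\neq 0$ and which should be made explicit (it holds in the paper's applications since $\mathcal I_{k,n}$ is prime).

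For the existence of $w$ your outline is workable, and you correctly isolate the reverse inclusion $\init_w(I)\subseteq\init_{M_\val}(I)$ as the real issue; the universal Gr\"obner basis repair does close it: since $I$ is homogeneous, a universal Gr\"obner basis $\mathcal U$ satisfies $\init_w(I)=\langle\init_w(g)\mid g\in\mathcal U\rangle$ for every $w$, so enlarging your generating set by $\mathcal U$ and re-flattening on the enlarged (still finite) exponent set gives $\init_w(I)=\langle\init_{M_\val}(g)\mid g\in\mathcal U\cup\{g_j\}\rangle=\init_{M_\val}(I)$. Two points should be tightened. First, the preliminary ``$GL_d(\mathbb Z)$ change of basis so that $\prec$ becomes lex'' is both imprecise (not every group order on $\mathbb Z^d$ is lex in integral coordinates) and unnecessary: on the finite set of values $M_\val u$ that occur, $\prec$ can be realized by a single linear form $e:\mathbb Z^d\to\mathbb Z$ --- exactly the device of \cite[Lemma~3.2]{Cal02} already quoted in the paper --- and one may take $w:=e\circ M_\val$, i.e.\ $w_j=e(\val(\bar x_j))$. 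Second, whatever flattening is used must preserve ties: if $M_\val u=M_\val v$ for two exponents of the same generator, one needs $w\cdot u=w\cdot v$, since otherwise $\init_w(g)$ would be strictly smaller than $\init_{M_\val}(g)$ and the inclusion $\init_{M_\val}(I)\subseteq\init_w(I)$ could fail; any $w$ that factors through $M_\val$, such as $e\circ M_\val$ or your $\sum_i N_i m_i$, has this property automatically, but it is worth stating, as it is the reason the weight vector must be built from the rows of $M_\val$ rather than chosen generically.
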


\subsection{Tropical Geometry}\label{sec:tropical}
We recall basic notions on tropical geometry, in particular on the tropical Grassmannian. 
For details we refer to \cite{M-S} and \cite{SS04}.
For $u\in\mathbb Z^n$ as before we denote $x^u=x_1^{u_1}\cdots x_n^{u_n}\in\mathbb C[x_1^{\pm 1},\dots, x_n^{\pm 1}]$.

\begin{definition}\label{def: trop fct}
Let $f=\sum a_{u}{x}^{ u}\in \mathbb C[x_1^{\pm1},\dots,x_n^{\pm1}]$. The \emph{tropicalization} of $f$ is the function $f^{\trop}:\mathbb R^{n}\to \mathbb R$ given by 
\[
f^{\trop}(w):=\min\{ w\cdot u \mid  u\in \mathbb Z^{n} \text{ and } a_{ u}\neq 0\}.
\]
\end{definition}

If $ w-v=(m,\dots,m)$, for some $v,w\in\mathbb R^{n}$ and $m\in \mathbb R$, we have that the minimum in $f^{\trop}({w})$ and $f^{\trop}({ v})$
is achieved for the same ${u}\in \mathbb Z^{n}$ with $ a_{u}\neq 0$.

\begin{definition}(\cite[Definitions~3.1.1/2]{M-S})\label{def:trop hypersurf}
Let $f=\sum a_{ u}{x}^{ u}\in \mathbb C[x_1^{\pm1},\dots,x_n^{\pm1}]$ and $V(f)$ the associated hypersurface in the algebraic torus $T^{n}=(\mathbb C^*)^{n}$. Then the \emph {tropical hypersurface} of $f$ is
\[
\trop(V(f)):=\left\{  w\in \mathbb R^{n} \left| 
\begin{matrix}
\text{the minimum in }f^{\trop}( w)\\
\text{is achieved at least twice}
\end{matrix}\right.\right\} .
\]
Let $I$ be an ideal in $\mathbb C[x_1^{\pm1},\dots,x_n^{\pm1}]$. The \textit{tropicalization} of the variety $V(I)\subset T^{n}$ is defined as
\[
\trop(V(I)):=\bigcap_{f\in I}\trop(V(f))\subset \mathbb R^{n}.
\]
\end{definition}
For a projective variety $V(I)\subset \mathbb P^{n-1}$ with $I$ a homogeneous ideal in $\mathbb C[x_1,\dots,x_n]$ we consider the ideal $\hat I:=I\mathbb C[x_1^{\pm1},\dots,x_n^{\pm1}]$.
Then $V(\hat I)=V(I)\cap T^n$. 
The tropicalization of a projective variety is defined as $\trop(V(I)):=\trop(V(\hat I))$.

Recall the notion of initial ideal from Definiton~\ref{def: init} and consider for $w\in \mathbb R^n$ the initial ideal $\init_w(I)$.
By \cite[Theorem 15.17]{Eis13},
there exists a flat family over $\mathbb{C}$ whose generic fiber over $t\neq 0$ is isomorphic
to $\mathbb C[x_1,\dots,x_n]/I$ and whose special fiber over $t=0$ is isomorphic to $\mathbb C[x_1,\dots,x_n]/\init_{w}(I)$. 
It is given by the following family of ideals
\begin{eqnarray}\label{eq: groebner family}
\tilde{I}_t:=\left\langle t^{-\min_{ u}\{w\cdot u\}} f(t^{w_1}x_1,\ldots,t^{w_n}x_n)\left\vert f=\sum a_{{u}}x^{{u}}\ \in I   \right.\right\rangle\subset \mathbb C[t,x_1^{\pm 1},\dots,x_n^{\pm 1}].
\end{eqnarray}
Let $I_s$ denote the ideal $\tilde I_t\vert_{t=s}$. For $s\not=0$ the isomorphism $\mathbb C[x_1,\dots,x_n]/I_s\cong \mathbb C[x_1,\dots,x_n]/I_1=\mathbb C[x_1,\dots,x_n]_i/I$ is given by a ring automorphism of $\mathbb C[x_1,\dots,x_n]$ sending $I_s$ to $I$.
If $\init_{w}(I)$ is \emph{toric}, i.e. a binomial prime ideal, then $V(\init_{w}(I))$ is a toric variety (see e.g. \cite[Lemma 2.4.14]{M-S}) and flat degeneration of $V(I)$ with family defined by $\tilde{I}_t$. 
To find toric initial ideals, it is reasonable to consider the tropicalization of $V(I)$ as due to the \emph{Fundamental Theorem of Tropical Geometry} \cite[Theorem~3.2.3]{M-S} we have
\[
\trop(V(I))=
\left\{
     w\in\mathbb R^{n} 
        %\left
        \mid     \init_{ w}(I) \text{  is monomial-free} 
        %\right.
\right\}.
\]
Further, by the \emph{Structure Theorem} \cite[Theorem 3.3.5]{M-S} $\trop(V(I))$ is the support of a pure rational $d$-dimensional polyhedral fan, where $d$ is the Krull-dimension of $I$.
It contains a linear subspace, called \emph{lineality space} defined as
\[
\trop_0(V(I)):=\{w\in\trop(V(I)) 
        %\left
        \mid     \init_{ w}(I)=I\}.
\]
If $I$ is homogeneous we have $\mathbb R(1,\dots,1)\subset \trop_0(V(I))$.
It is often convenient to consider the quotient of $\trop(V(I))$ by the lineality space.
We can choose a fan structure on $\trop(V(I))$ by considering it as a subfan of the Gr\"obner fan of $I$. 
In particular, if $v$ and $w$ lie in the relative interior of a cone $C$, denoted by $v,w\in C^\circ$, if and only if
\[
\init_w(I)=\init_v(I).
\]
We therefore adopt the notation $\init_C(I):=\init_w(I)$ for an arbitrary $w\in C^\circ$.
We say a cone $C$ is \emph{prime}, if $\init_C(I)$ is a prime ideal.

\begin{definition}
The \emph{tropical Grassmannian}, denoted $\trop(\Gr(k,\mathbb C^n))\subset \mathbb R^{\binom{n}{k}}$ is the tropical variety of the Pl\"ucker ideal $\mathcal I_{k,n}$.
By \cite[Corollary~3.1]{SS04} it is a $k(n-k)+1$-dimensional polyhedral fan whose maximal cones are all of this dimension.
\end{definition}

We mainly focus on the tropicalization of $\Gr(2,\mathbb C^n)$. 
%\textcolor{red}{It is not hard to see that $\trop_0(\Gr(2,\mathbb C^n))$ is $n$-dimensional generated by $\ell_i=\sum_{1\le j\le n, j\not=i}e_{ij}$, where $\{e_{ij}:1\le i<j\le n\}$ denotes the standard basis of $\mathbb R^{\binom{n}{2}}$ with the convention $e_{ij}=e_{ji}$.} 
By \cite[Corollary~4.4]{SS04} $\trop(\Gr(2,\mathbb C^n))$ has the very nice property that every initial ideal $\init_C(\mathcal I_{2,n})$ associated to a maximal cone $C\subset \trop(\Gr(2,\mathbb C^n))$ is toric. 
Further, Speyer and Sturmfels show the following:

\begin{theorem*}(\cite[Theorem~3.4]{SS04})\label{thm: SS space of trees is trop Gr} 
The quotient $\trop(\Gr(2,\mathbb C^n))/\trop_0(\Gr(2,\mathbb C^n))\subset \mathbb R^{\binom{n}{2}}/\mathbb R^{n}$ intersected with the unit sphere is, up to sign, the \emph{space of phylogenetic trees} \cite{BHV01}. 
\end{theorem*}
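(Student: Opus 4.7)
The plan is to identify $\trop(\Gr(2,\mathbb C^n))$ with the space of tree metrics on $n$ leaves via the classical four-point condition, and then to recognize the quotient by the lineality space as the space of phylogenetic trees.

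\textbf{Step 1: A tropical basis for $\mathcal I_{2,n}$.} First I would show that the three-term Pl\"ucker relations $R_{r,s,u,v}=p_{rs}p_{uv}-p_{ru}p_{sv}+p_{rv}p_{su}$ for $1\le r<s<u<v\le n$ form a \emph{tropical basis} of $\mathcal I_{2,n}$, so that
\[
\trop(\Gr(2,\mathbb C^n))=\bigcap_{r<s<u<v}\trop(V(R_{r,s,u,v})).
\]
Consequently, $w=(w_{ij})\in\mathbb R^{\binom n2}$ lies in $\trop(\Gr(2,\mathbb C^n))$ if and only if for every quadruple $r<s<u<v$ the minimum of the three sums $w_{rs}+w_{uv}$, $w_{ru}+w_{sv}$, $w_{rv}+w_{su}$ is attained at least twice. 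The main technical obstacle of the theorem lives here: one needs to show that no element of $\mathcal I_{2,n}$ produces a stronger constraint, e.g. by verifying that the three-term relations are a Gr\"obner basis for any weight in the intersection, so that monomial-freeness of $\init_w(\mathcal I_{2,n})$ already follows from monomial-freeness of the initial forms of the $R_{r,s,u,v}$'s.

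\textbf{Step 2: The four-point condition.} Substituting $d_{ij}:=-w_{ij}$, the condition from Step 1 becomes the classical \emph{four-point condition} of Buneman/Zaretski\u{\i}: a symmetric function $d$ on pairs of $[n]$ is a \emph{tree metric}, i.e.\ realizable as the path-length metric of an edge-weighted tree with leaves labeled by $[n]$, if and only if for every $r<s<u<v$ the \emph{maximum} of $d_{rs}+d_{uv}$, $d_{ru}+d_{sv}$, $d_{rv}+d_{su}$ is attained at least twice. Thus $w\in\trop(\Gr(2,\mathbb C^n))$ precisely when $-w$ is a tree metric, accounting for the ``up to sign'' in the statement.

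\textbf{Step 3: The lineality space.} Next I would identify $\trop_0(\Gr(2,\mathbb C^n))$ with the image $L$ of the linear map $\mathbb R^n\hookrightarrow\mathbb R^{\binom n2}$, $(a_1,\dots,a_n)\mapsto(a_i+a_j)_{i<j}$. The inclusion $L\subseteq\trop_0$ is immediate since each monomial in a trinomial $R_{r,s,u,v}$ has the same $L$-weight. The reverse inclusion follows from Step 1: if $\init_w(\mathcal I_{2,n})=\mathcal I_{2,n}$, then for every quadruple all three sums $w_{rs}+w_{uv}$, $w_{ru}+w_{sv}$, $w_{rv}+w_{su}$ are equal, and a short linear-algebra argument yields $w_{ij}=a_i+a_j$ for some $a\in\mathbb R^n$. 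Geometrically, translating $d$ by an element of $L$ shifts the pendant edge at leaf $i$ by $a_i$, which is invisible in the underlying combinatorial tree, so the quotient parametrizes tree metrics modulo pendant-edge lengths.

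\textbf{Step 4: Identification with the space of trees.} Finally I would match cones with the polyhedral description of the space of phylogenetic trees from \cite{BHV01}: each maximal cone corresponds to a trivalent tree $\mathtt T$ with $n$ leaves, and parametrizes assignments of non-negative lengths to the $n-3$ interior edges of $\mathtt T$. Given $w$ satisfying Step 2, the combinatorial type of the associated tree is recovered from the splits $\{r,u\}\mid\{s,v\}$ (etc.) singled out by where the minimum is achieved at each quadruple; the interior-edge lengths are then recovered uniquely from $w$ modulo $L$. Intersecting with the unit sphere corresponds to normalizing total interior-edge length, completing the identification.
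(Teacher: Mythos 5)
This statement is not proved in the paper at all: it is quoted verbatim from Speyer--Sturmfels \cite[Theorem~3.4]{SS04}, so there is no in-paper argument to compare against. Measured against the original proof, your outline follows the same standard route (three-term Pl\"ucker quadrics as a tropical basis, the Buneman four-point condition, the lineality space $w_{ij}=a_i+a_j$, and the cone-by-cone match with the Billera--Holmes--Vogtmann complex), and Steps 2--4 are essentially correct bookkeeping; the remark that intersecting with the unit sphere ``normalizes total interior-edge length'' is only true up to homeomorphism, but that is cosmetic.

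The genuine gap is in Step 1, which is the entire content of the theorem and which you only assert. Moreover, the justification you sketch does not suffice as stated: even if the $R_{r,s,u,v}$ were shown to be a Gr\"obner basis for $w$, so that $\init_w(\mathcal I_{2,n})=\langle \init_w(R_{r,s,u,v})\rangle$, monomial-freeness of the generators' initial forms does not imply monomial-freeness of the ideal they generate (a binomial ideal with no monomial generators can still contain monomials). One needs an actual argument that for every $w$ satisfying the four-point condition the initial ideal is monomial-free. The two standard ways to close this are exactly what \cite{SS04} does: either identify $\init_w(\mathcal I_{2,n})$, for $w$ in the relative interior of the cone of a tree $T$, with an explicit toric (binomial prime) ideal attached to $T$ (this is also the source of \cite[Corollary~4.4]{SS04}, which the present paper uses), or lift $w$ to an honest point of $\Gr(2,K^n)$ over a Puiseux series field --- for $k=2$ this amounts to choosing $n$ points $u_1,\dots,u_n$ on a line with prescribed valuations $\mathrm{val}(u_i-u_j)$ dictated by the tree metric, so that the Pl\"ucker coordinates $u_i-u_j$ tropicalize to $w$, and then invoking the Fundamental Theorem of Tropical Geometry. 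Without one of these ingredients your Step 1, and hence the whole proof, is incomplete.
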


The theorem implies that every maximal prime cone $C$ can be associated with a \emph{labeled trivalent tree} with $n$ leaves. 
The set of all labeled trivalent trees with $n$ leaves is denoted by $\mathcal T_n$.
A \emph{trivalent tree} is a graph with internal vertices of valency three and no loops or cycles of any kind. 
Non-internal vertices are called \emph{leaves} and the word \emph{labeled} refers to labeling the leaves by $1,\dots, n$.
We call an edge \emph{internal}, if it connects two internal vertices.

We label the standard basis of $\mathbb R^{\binom{n}{2}}$ by ordered sequences  $(i<j)\in I_{2,n}$ corresponding to Pl\"ucker variables. 
The following definition shows how to obtain a point in the relative interior of a maximal cone in $\trop(\Gr(2,\mathbb C^n))$ from a labeled trivalent tree. It follows from \cite[Theorem~3.4]{SS04}.

\begin{definition}\label{def:treedeg}
Let $T$ be a labeled trivalent tree with $n$ leaves.  Then the $(i<j)$'th entry of the weight vector ${w}_T\in \trop(\Gr(2,\mathbb C^n))$ is 
\[
-\#\{\text{internal edges on path from leaf } i \text{ to leaf } j \text{ in }T\}.
\]
For notational convenience we set
$\init_T(\mathcal I_{2,n}):=\init_{{\bf w}_T}(\mathcal I_{2,n})$. The corresponding maximal cone in $\trop(\Gr(2,\mathbb C^n))$ is denoted $C_T$. 
\end{definition}

%Later in \S\ref{sec:BFFHL} we refer to the entries of $-{\bf w}_T$ (note the sign change) as \emph{tree degrees}, we denote $\deg_T p_{i,j}=(-{\bf w}_T)_{(i,j)}$.
\begin{remark}\label{rem:families}
Combining the above, we have that every trivalent labeled tree induces a toric degeneration of $\Gr(2,\mathbb C^n)$ with flat family as given in \eqref{eq: groebner family}.
A main result of \cite{KM16} is that there is a full-rank valuation with finitely generated value semi-group associated to prime cones in the tropicalization of a projective variety.
This yields a flat family using Rees algebras as in Definition~\ref{def: Rees algebra}.
\end{remark}

The symmetric group $S_n$ acts on $\mathcal T_n$ by permuting the labels of the leaves of a tree.
We also have an $S_n$-action on Pl\"ucker coordinates given by
\[
\sigma(p_{ij})={\text{sgn}(\sigma)}p_{\sigma^{-1}(i)\sigma^{-1}(j)} \text{ for }\sigma\in S_n.
\]
If $\sigma^{-1}(i)>\sigma^{-1}(j)$, we set $p_{\sigma^{-1}(i)\sigma^{-1}(j)}:=-p_{\sigma^{-1}(j)\sigma^{-1}(i)}$.
The $S_n$-action induces a ring automorphism of $\mathbb C[p_{ij}]_{i<j}$ for every $\sigma\in S_n$.
It sends $\init_T(\mathcal I_{2,n})$ to $\init_{\sigma(T)}(\mathcal I_{2,n})$ for every trivalent labeled tree $T\in \mathcal T_n$.
Denote the equivalence class of $T$ with respect to this action by $\mathtt T$. 
It is uniquely determined by the underlying \emph{(unlabeled) trivalent tree} with $n$ leaves, see for example Figure~\ref{fig:gr(2,4)}. We denote the set of trivalent trees by $\mathcal T_n/S_n$.

\begin{figure}[ht]
\begin{center}
\begin{tikzpicture}[scale=.5]

\draw (0,0) -- (1,1) -- (0,2);
\draw (1,1) -- (3,1) -- (4,2);
\draw (3,1) -- (4,0);

\end{tikzpicture}
\end{center}\caption{A trivalent tree with $4$ leaves.}\label{fig:gr(2,4)}
\end{figure}

Consider a trivalent tree $\mathtt T\in\mathcal T_n/S_n$. If there are two non-internal edges connected to the same internal vertex $c$, then we say $\mathtt T$ has a \emph{cherry} at vertex $c$.

\begin{lemma}\label{lem:cherry}
Every trivalent tree with $n\ge4$ leaves has a cherry.
\end{lemma}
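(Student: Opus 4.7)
The plan is to prove the lemma by constructing an auxiliary ``internal tree'' associated with $\mathtt T$ whose leaves correspond to cherries of $\mathtt T$. Concretely, let $\mathtt T^\circ$ be the subgraph of $\mathtt T$ whose vertex set consists of the internal vertices of $\mathtt T$ and whose edge set consists of the internal edges of $\mathtt T$. The key observation is that a vertex $v$ is a cherry of $\mathtt T$ precisely when its degree in $\mathtt T^\circ$ is at most $1$: indeed, $v$ has degree $3$ in $\mathtt T$, so if at most one of its three incident edges is internal, then at least two must be leaf-edges.

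First I would carry out a standard degree count to show that $\mathtt T$ has exactly $n-2$ internal vertices. Writing $V_I$ for the number of internal vertices, the total number of edges in the tree is $n + V_I - 1$, while the handshake lemma gives $n \cdot 1 + V_I \cdot 3 = 2(n + V_I -1)$, hence $V_I = n - 2$. Next I would verify that $\mathtt T^\circ$ is itself a tree on these $n-2$ vertices: it is acyclic because it is a subgraph of the tree $\mathtt T$, and it is connected because any path in $\mathtt T$ joining two internal vertices cannot pass through a leaf (leaves have degree $1$ in $\mathtt T$, so they cannot appear as interior vertices of a path), and therefore uses only internal edges.

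The final step is to invoke the elementary fact that every tree with at least two vertices has a vertex of degree (at most) $1$. Since $n \geq 4$, the tree $\mathtt T^\circ$ has $n - 2 \geq 2$ vertices, so it contains at least one such vertex $v$ (in fact at least two, when $n-2 \geq 2$). By the observation above, such a vertex is a cherry of $\mathtt T$, completing the proof.

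I do not anticipate a major obstacle: the argument is essentially a combination of a degree count and the basic fact about leaves in trees. The only place that requires a brief justification is the connectedness of $\mathtt T^\circ$, but this follows immediately from the structure of paths in a tree. The hypothesis $n \geq 4$ enters exactly to guarantee that $\mathtt T^\circ$ has at least two vertices and is therefore nondegenerate as a tree; the boundary case $n = 3$, where $\mathtt T^\circ$ consists of a single vertex, is the unique obstruction and is correctly excluded from the statement.
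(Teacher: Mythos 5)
Your proof is correct, but it takes a genuinely different route from the paper. The paper argues by induction on $n$: given a trivalent tree with $n+1$ leaves, it removes a leaf-edge, invokes the induction hypothesis to find a cherry in the smaller tree, and checks that re-attaching the edge either preserves that cherry or creates a new one. (Strictly speaking, removing a leaf-edge leaves a degree-two vertex that must be suppressed for the smaller tree to be trivalent; the paper glosses over this.) You instead give a direct, non-inductive argument: a degree count shows there are $n-2$ internal vertices, the subgraph $\mathtt T^\circ$ on internal vertices and internal edges is itself a tree (acyclic as a subgraph of $\mathtt T$, connected because paths between internal vertices cannot pass through leaves), and any degree-$\le 1$ vertex of $\mathtt T^\circ$ is incident to at least two leaf-edges of $\mathtt T$, i.e.\ is a cherry. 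Since a tree on $n-2\ge 2$ vertices has at least two such vertices, your argument in fact yields the stronger conclusion that every trivalent tree with $n\ge 4$ leaves has at least two cherries (which is visible in the paper's base case $n=4$ but not carried through its induction), and it sidesteps the degree-two subtlety in the paper's leaf-removal step.
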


\begin{proof}
We use induction on $n$. For $n=4$ Figure~\ref{fig:gr(2,4)} displays the only trivalent tree in $\mathcal T_4/S_4$ and we see, it has two cherries. Now consider a trivalent tree $\mathtt T'\in\mathcal T_{n+1}/S_{n+1}$. We remove one edge connected to a leaf and obtain a tree $\mathtt T\in\mathcal T_n/S_n$. By induction, $\mathtt T$ has a cherry at some vertex $c$. Adding the removed edge back there are two possibilities: either we add it to an internal edge (creating a new internal vertex), then the cherry also exists in $\mathtt T'$. Or we add it at an edge with a leaf, hence create a new cherry. 
\end{proof}

\section{Birational sequences}\label{sec:pre_birat}

We recall the definition of birational sequences due to Fang, Fourier, and Littelmann in \cite{FFL15} and associated valuations. 
After proving the central Lemma~\ref{lem:birat} we define a new class of birational sequences called \emph{iterated} in Definition~\ref{def:itseq}.

From now on let $d:=k(n-k)$ be the dimension of $\Gr(k,\mathbb C^n)$.
Consider a positive root $\beta\in R^+$.
The \emph{root subgroup} corresponding to $\beta$ is given by $U_{-\beta}:=\{\exp(zf_{\beta})\mid z\in \mathbb C\}\subset U^-$, where $\exp(zf_{\beta})=\mathds 1 + zf_{\beta}$.

\begin{definition}(\cite[Definition 2]{FFL15})\label{def:birat}
Let $S=(\beta_1,\dots,\beta_{d})$ be a sequence of positive roots. Then $S$ is called a \emph{birational sequence} for $\Gr(k,\mathbb C^n)$ if the multiplication map
\[
\psi_S:=\text{mult}:U_{-\beta_1}\times\dots\times U_{-\beta_{d}}\to U^-
\]
has image birational to $U_k^-$.
\end{definition}

Notice that $U_{-\beta_1}\times\dots\times U_{-\beta_{d}}\cong \mathbb A^d$.

\begin{example}\label{exp: birat seq, PBW and string}
The following are two first (and motivating) examples of birational sequences:
\begin{enumerate}
    \item The multiplication map $\prod_{\beta \in R_k^+} U_{-\beta}\to U^-$ has image $U_k^-$ and hence, is birational.
    In particular, every sequence containing all roots in $R_k^+$ (in arbitrary order) is a birational sequence called \emph{PBW-sequence}, see \cite[Example~1 and page 131]{FFL15}. 
    We distinguish between PBW-sequences $S$ and $S'$ when the order of the roots in $S$ is different from the order of the roots in $S'$.
    \item For $w_0$ the longest element in $S_n$, let $w_k\in S_n/\langle s_i\mid i\not=k \rangle$ denote a coset representative of $w_0$.
    Fix a reduced decomposition $\w_k=s_{i_1}\dots s_{i_{d}}$ of $w_k$ and let $S=(\varepsilon_{i_1}-\varepsilon_{i_1+1},\dots,\varepsilon_{i_d}-\varepsilon_{i_d+1})$ be the corresponding sequence of simple roots. 
    Then $S$ is a birational sequence, it is referred to as \emph{the reduced decomposition case} in \cite[Example~2]{FFL15}. 
\end{enumerate}
\end{example}
\noindent
The second example shows that repetitions of positive roots may occur in birational sequences. 
Our aim is to shed some light on sequences that are \emph{neither} PBW \emph{nor} associated to reduced decompositions for Grassmannians.
The following lemma allows us to construct such sequences for $\Gr(k,\mathbb C^{n+1})$ from sequences for $\Gr(k,\mathbb C^n)$. 

\begin{lemma}\label{lem:birat}
Let $S=(\beta_{1},\dots,\beta_{d})$ be a birational sequence for $\Gr(k,\mathbb C^n)$ and chose $I=\{i_1,\dots,i_k\}\subset[n]$ with $\vert I\vert =k$. Then the following is a birational sequence for $\Gr(k,\mathbb C^{n+1})$:
\begin{eqnarray}\label{seq}
S':=(\varepsilon_{i_1}-\varepsilon_{n+1},\dots,\varepsilon_{i_k}-\varepsilon_{n+1},\beta_{1},\dots,\beta_{d}).
\end{eqnarray}
\end{lemma}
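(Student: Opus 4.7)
My plan is to verify birationality of $\psi_{S'}$ by computing Pl\"ucker coordinates explicitly and then inverting for the new parameters $z_1,\ldots,z_k$. Since $\dim\Gr(k,\mathbb C^{n+1})=k(n+1-k)=d+k$ agrees with the length of $S'$, it suffices to show that the composition of $\psi_{S'}$ with the projection $U^-\to\Gr(k,\mathbb C^{n+1})$ is generically injective; dominance, and hence birationality, then follow from dimension reasons.

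First I would unpack $\psi_{S'}$ as a matrix and compute its Pl\"ucker coordinates. The operator $f_{\varepsilon_{i_j}-\varepsilon_{n+1}}$ equals the elementary matrix $E_{n+1,i_j}$, and a direct calculation gives $E_{n+1,i_j}E_{n+1,i_l}=0$ for all $j,l\in[k]$. Hence these operators commute pairwise and are nilpotent of order two, so
\[
\prod_{j=1}^k\exp\!\bigl(z_j f_{\varepsilon_{i_j}-\varepsilon_{n+1}}\bigr) \;=\; \mathds 1+\sum_{j=1}^k z_j E_{n+1,i_j}.
\]
Setting $M:=\psi_S(y)\in SL_n$ and embedding $SL_n\hookrightarrow SL_{n+1}$ block-diagonally, this yields
\[
\psi_{S'}(z,y)\;=\;\begin{pmatrix} M & 0 \\ v & 1\end{pmatrix}\in U^-\subset SL_{n+1},\qquad v_\ell\;=\;\sum_{j=1}^k z_j\, M_{i_j,\ell}\ \ (\ell\in[n]).
\]
Computing the $k\times k$ minors on columns $[k]$, one gets $p_\uj(\psi_{S'}(z,y))=p_\uj(M)$ for $\uj\subset[n]$, and for $\uj=\uj'\cup\{n+1\}$ with $\uj'\subset[n]$ of size $k-1$, expanding along row $n+1$ and using linearity of the determinant in that row gives
\[
p_{\uj'\cup\{n+1\}}(\psi_{S'}(z,y))\;=\;\sum_{j=1}^{k}\pm z_j\cdot p_{\uj'\cup\{i_j\}}(M),
\]
where the $j$-th summand vanishes whenever $i_j\in\uj'$.

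Now I would invert. By hypothesis $\psi_S$ is birational onto $\Gr(k,\mathbb C^n)$, so the data $\{p_\uj(M)\}_{\uj\subset[n]}$ generically determines $y$ (hence $M$). To recover $z$ I would single out the $k$ equations coming from the subsets $\uj'_l:=I\setminus\{i_l\}$, $l=1,\ldots,k$. Since $i_j\in\uj'_l$ if and only if $j\ne l$, only the $l$-th term survives in each such equation, which reduces to $p_{\uj'_l\cup\{n+1\}}(\psi_{S'}(z,y))=\pm z_l\cdot p_I(M)$. As $\psi_S$ is dominant, the Pl\"ucker coordinate $p_I$ is not identically zero on the image of $\psi_S$, so on the open locus where $p_I(M)\ne 0$ the tuple $z$ is uniquely recovered. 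Thus $\psi_{S'}$ is generically one-to-one onto its image in $\Gr(k,\mathbb C^{n+1})$, and the dimension match forces birationality. The main difficulty is exhibiting an invertible subsystem for $z$; the key trick is the choice $\uj'_l=I\setminus\{i_l\}$, which collapses the generally tangled $z$-linear system into a diagonal one with common diagonal entry $\pm p_I(M)$.
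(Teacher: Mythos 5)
Your proof is correct and follows essentially the same route as the paper: you compute the extra bottom row of $\psi_{S'}$ explicitly, pull back Pl\"ucker coordinates, and recover the new parameters as $\pm p_{I\setminus\{i_l\}\cup\{n+1\}}/p_{I}$, which is precisely the inverse map the paper writes down. The only difference is in packaging: the paper exhibits the full inverse field homomorphism and checks it using the Pl\"ucker relations, whereas you conclude by generic injectivity plus the dimension count $d+k=\dim\Gr(k,\mathbb C^{n+1})$, which is an equally valid way to finish.
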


\begin{proof}
%The sequence $S'$ yields the following product of root subgroups 
%\[\mathcal G'=U_{-\alpha_{i_1,n}}\times\dots\times U_{-\alpha_{i_k,n}}\times U_{-\beta_{1}}\times \dots \times U_{-\beta_{k(n-k)}}\subset SL_{n+1}.\] 
The elements of $\text{im}(\psi_{S'})$ are of the form 
\begin{eqnarray*}
\exp(y_{1}f_{{i_1,n+1}})\cdots \exp(y_{k}f_{{i_k,n+1}})\exp(z_1f_{\beta_1})\cdots \exp(z_{d}f_{\beta_{d}})
=\begin{pmatrix}
1 & 0  &\dots& 0\\
a_{2,1} & 1 &\dots& 0\\
\vdots & \ddots  &\ddots & 0\\
a_{n+1,1}&\dots &a_{n+1,n}&1\\
\end{pmatrix},
\end{eqnarray*}
for $y_1,\dots,y_k,z_1,\dots,z_{d}\in\mathbb C$.
They satisfy $a_{n+1,j}=y_{1}a_{i_1,j}+\dots + y_{k}a_{i_k,j}$ for $1\le i\le n$. 
Deleting the last row and column gives the image of $\psi_S$, i.e. for $1\le i<j\le n$ we have $a_{i,j}\in\mathbb C[z_1,\dots,z_d]$.
%Denote the $i$-th row of a fixed element $A\in \text{im}(\psi_{S'})$ by $A_i=(a_{i,1},\dots,a_{i,n+1})\in\mathbb C^{n+1}$. 
We need to show that $\text{im}(\psi_{S'})$ is birational to $U_k^-\subset SL_{n+1}$ given that $\text{im}(\psi_S)$ is birational to $U_k^-\subset SL_n$.
Denote the corresponding map for $S$ by $\phi_S:\text{im}(\psi_S)\dashrightarrow U_k^-\subset SL_n$.
Recall that elements of $U_k^-$ are block matrices of form 
\begin{eqnarray*}
(x_{i,j})_{i,j}=\left(\begin{matrix}
\mathds 1 & 0 \\
* & \mathds 1
\end{matrix}\right),
\end{eqnarray*}
where non-trivial $x_{i,j}$ occur for $1\le j\le k$ and $k+1\le i \le n$.
Let $\ui\in I_{k,n}$ be the orderd sequence with entries $i_1,\dots,i_k$.
We define a birational map $\phi_{S'}:\text{im}(\psi_{S'})\dashrightarrow U^-\subset SL_{n+1}$ by specifying for $1\le j\le k$
\[
y_j\mapsto\frac{(-1)^{k-j+1}\bar p_{\ui\setminus i_j\cup n+1}}{\bar p_{\ui}}\quad \text{and} \quad 
a_{r,s} \mapsto
 \phi_{S}(a_{r,s})=x_{r,s} \quad \text{if} \quad 1\le r,s\le n.
\]
Then $\phi_{S'}(a_{n+1,j})=\sum_{l=1}^k \phi_{S'}(y_{l})\phi_S(a_{i_l,j})$. 
Before continuing with the computation note that on $U_k^-\subset SL_{n+1}$ we have $\bar p_{(1<\dots<k)\setminus j\cup i_l}((x_{r,s})_{r,s})=x_{i_l,j}$ for $1\le l\le k$.
If $j\le k$, making use of the Pl\"ucker relations \eqref{eq:plucker rel} this yields the following:
\begin{eqnarray*}
\phi_{S'}(a_{n+1,j}) &=& \sum_{l=1}^k \phi_{S'}(y_{l})\phi_S(a_{i_l,j})
= \sum_{l=1}^k \frac{(-1)^{k-l+1}\bar p_{\ui\setminus i_l\cup n+1}}{\bar p_{\ui}}x_{i_l,j} \\
&=& \sum_{l=1}^k \frac{(-1)^{k-l+1}\bar p_{\ui\setminus i_l\cup n+1}}{\bar p_{\ui}}\bar p_{(1<\dots<k)\setminus j \cup i_l}
= \bar p_{(1<\dots<k)\setminus j \cup n+1} = x_{n+1,j}.
\end{eqnarray*}
For $l>k$ we obtain $\phi_{S'}(a_{n+1,l})\in \mathbb C(x_{i,j}\vert 1\le j\le k, k+1\le i\le n)$.
In particular, we have $\mathbb C(\text{im}(\phi_{S'}))\cong \mathbb C(U_k^-)\cong \mathbb C(\Gr(k,\mathbb C^{n+1}))$.
%To prove that the image of $\psi_{S'}:\mathbb A^{d+k}\to U^-\subset SL_{n+1}$ is indeed birational to $U_k^-$ (resp. $\Gr(k,\mathbb C^{n+1})$). 
By little abuse of notation we denote the invertible maps induced by $S$ between $\mathbb C(\Gr(k,\mathbb C^n))$ and $\mathbb C(\mathbb A^d)\cong \mathbb C(z_1,\dots,z_d)$ by $\psi_S^*$ and $(\psi_S^*)^{-1}$, respectively.
Induced by the pullback of $\psi_{S'}$ we consider the map between the rings of rational functions (also denoted $\psi_{S'}^*$):
\begin{eqnarray*}
\psi_{S'}^*:\mathbb C(\Gr(k,\mathbb C^{n+1})) &\to& \mathbb C(\mathbb A^{d+k}) \cong \mathbb C(z_1,\dots,z_d,y_1,\dots,y_k) \\
\bar p_{\uj} &\mapsto& \left\{
\begin{matrix}
\psi_S^*(\bar p_{\uj}) & \text{if}& \uj\in I_{k,n},\\
\sum_{l=1}^k y_l \psi_S^*((-1)^{\ell(\uj,i_l)}\bar p_{\uj \setminus n+1 \cup i_l}) & \text{if} & n+1\in \uj.
\end{matrix}\right.
\end{eqnarray*}
A straightforward computation reveals that the following map is indeed inverse to $\psi_{S'}^*$ above
\begin{eqnarray*}
(\psi_{S'}^*)^{-1}:\mathbb C(\mathbb A^{d+k}) &\to& \mathbb C(\Gr(k,\mathbb C^{n+1})) \cong \mathbb C(\bar p_{\uj})_{\uj\in I_{k,n+1}}\\
z_i &\mapsto& (\psi_S^*)^{-1}(z_i) \in \mathbb C(\bar p_{\uj})_{\uj \in I_{k,n}}, \\
y_j &\mapsto& \frac{(-1)^{k-j+1}\bar p_{\ui \setminus i_j \cup n+1}}{\bar p_{\ui}},
\end{eqnarray*}
which completes the proof.
\end{proof}

\begin{definition}\label{def:itseq}
For $k<n$ consider a birational sequence for $\Gr(k,\mathbb C^{k+1})$. Now extend it as in (\ref{seq}) to a birational sequence for $\Gr(k,\mathbb C^{k+2})$. Repeat this process until the outcome is a birational sequence for $\Gr(k,\mathbb C^n)$. Birational sequences of this form are called \emph{iterated}.
\end{definition}

We explain how to obtain a valuation from a fixed birational sequence $S=(\beta_1,\dots,\beta_{d})$ for $\Gr(k,\mathbb C^n)$ as constructed in \cite[\S7]{FFL15}. 
Define the \emph{height function} $\height:R^+\to \mathbb Z_{\ge 0}$ by $\height(\varepsilon_i-\varepsilon_j):=j-i$. 
Then the \emph{height weighted function} $\Psi_S: \mathbb Z^{d}\to \mathbb Z$ is given by 
\[
\Psi_S(m_1,\dots,m_{d}):=\sum_{i=1}^{d}m_i\height(\beta_i).
\] 
Let $<_{\text{lex}}$ be the lexicographic order on $\mathbb Z^{d}$. 
The \emph{$\Psi_S$-weighted reverse lexicographic order} $\prec_{\Psi_S}$ on $\mathbb Z^d$ is defined by setting for ${m,n}\in \mathbb Z^{d}$
\begin{eqnarray}\label{eq: def psi wt order}
{m}\prec_{\Psi_S}{n} :\Leftrightarrow \Psi_S({ m})<\Psi_S({n}) \text{ or } \Psi_S({ m})=\Psi_S({ n}) \text{ and } { m}>_{\text{lex}}{n}.
\end{eqnarray}
When the sequence $S$ is clear from the context we denote $\Psi_S$ by $\Psi$.

\begin{definition}(\cite[\S7]{FFL15})\label{def:valseq}
Let $f=\sum a_{u}{y}^{u}$ with $u\in\mathbb Z^d_{\ge0}$ be a non-zero polynomial in $\mathbb C[y_1,\dots,y_{d}]$. 
The valuation $\val_S:\mathbb C[y_1,\dots,y_{d}]\setminus \{0\}\to(\mathbb Z_{\ge 0}^{d},\prec_{\Psi})$ associated to $S$ is defined as
\begin{eqnarray}\label{eq:valseq}
\val_S(f):=\min{}_{\prec_\Psi}\{{u}\in\mathbb Z_{\ge 0}^{d}\mid a_{u}\ne 0\}.
\end{eqnarray}
We extend $\val_S$ to a valuation on $\mathbb C(y_1,\dots,y_{d})\setminus\{0\}$ by $\val_S(\frac{f}{g}):=\val_S(f)-\val_S(g)$ for $f,g\in \mathbb C[y_1,\dots, y_d]\setminus \{0\}$.
\end{definition}

Valuations of form \eqref{eq:valseq} are called \emph{lowest term valuations}.
As $S$ is a birational sequence, for every $f\in \mathbb C(\Gr(k,\mathbb C^n))$ there exists a unique element $\psi_S^*(f)\in\mathbb C(y_1,\dots,y_{d})$. %:\mathbb C(\mathbb A^{d})\to \mathbb C(\Gr(k,\mathbb C^n))$. 
Hence, we have a valuation on $\mathbb C(\Gr(k,\mathbb C^n))\setminus\{0\}$ given by $\val_S(f):=\val_S(\psi_S^*(f))$. 
We restrict to obtain
\[
\val_S:A_{k,n}\setminus\{0\}\to (\mathbb Z^d_{\ge 0},\prec_\Psi).
\]
Denote by $S(A_{k,n},\val_S)$ the associated value semi-group and the associated graded algebra by $\gr_S(A_{k,n})$.
For the images of Pl\"ucker coordinates $\bar p_{\uj}\in A_{k,n}$ we choose as before the notation $\overline{p_{\uj}}\in\gr_S(A_{k,n})$.
The weighting matrix for $\val_S$ is denoted $M_S$.

\begin{theorem}\label{thm: seq and trop k}
Let $S$ be an iterated sequence for $\Gr(k,\mathbb C^n)$ and $\val_S:A_{k,n}\setminus\{0\}\to (\mathbb Z^d_{\ge 0},\prec_\Psi)$ the corresponding valuation. 
Then there exists a cone $C\subset \trop(\Gr(k,\mathbb C^n))$ such that 
\[
\init_{M_{S}}(\mathcal I_{k,n})=\init_C(\mathcal I_{k,n}).
\]
\end{theorem}
\begin{proof}
We need to show that $\val_S$ is a full rank valuation. 
Then by \cite[Corollary~3]{B-quasival} the initial ideal $\init_{M_\val}(\mathcal I_{k,n})$ is monomial-free.
Moreover, there exists $w\in \mathbb Z^{\binom{n}{k}}$ with $\init_{ w}(\mathcal I_{k,n})=\init_{M_\val}(\mathcal I_{k,n})$.
In particular, we have a cone $C\subset \trop(\Gr(k,\mathbb C^n))$ with ${ w}\in C^\circ $.

To prove that $\val_S$ has full rank, it suffices to show that $M_S$ has full rank.
As $S$ is an iterated sequence we pursue by induction on $n$.
If $n=k+1$ we may assume that $S=(\varepsilon_1-\varepsilon_{k+1},\dots,\varepsilon_k-\varepsilon_{k+1})$.
In this case, after choosing an apropriate order on Pl\"ucker coordinates, $M_S$ contains the identity matrix as a submatrix.
Now assume that the claim is true for $n-1$ and let 
$S=(\varepsilon_{i_1}-\varepsilon_{n},\dots,\varepsilon_{i_k}-\varepsilon_{n},\beta_{1},\dots,\beta_{d'})$, where $(\beta_1,\dots,\beta_{d'})$ is an iterated sequence for $\Gr(k,\mathbb C^{n-1})$.
We have to show that the rows of $M_S$ corresponding to $\varepsilon_{i_1}-\varepsilon_{n},\dots,\varepsilon_{i_k}-\varepsilon_{n}$ are linearly independent.
Consider $\uj^1,\dots, \uj^k\in I_{k,n}$ such that for all $l\le k$ we have $i_1,\dots,i_{l-1},n\in \uj^l$ and $i_l\not \in \uj^l$. 
Compute the columns of $M_S$ corresponding to $\val_S(\bar p_{\uj^1}),\dots, \val_S(\bar p_{\uj^k})$ using \cite[Proposition~2]{FFL15}.
We see that the submatrix with rows corresponding to $\varepsilon_{i_1}-\varepsilon_{n},\dots,\varepsilon_{i_k}-\varepsilon_{n}$ is a $k\times k$-identity matrix.
Hence, the columns of $\val_S(\bar p_{\uj^1}),\dots, \val_S(\bar p_{\uj^k})$ are linearly independent and so $M_S$ is of full rank by induction.
\end{proof}

\begin{corollary}\label{cor:full rank}
For every iterated sequence $S$ the weighting matrix $M_S$ is of full rank.
\end{corollary}

\section{Iterated sequences for \texorpdfstring{$\Gr(2,\mathbb C^n)$}{}}\label{sec:proof}
In this subsection we prove Theorem~\ref{thm:main birat} stated in the introduction. 
After proving Proposition~\ref{prop: init M_S= init C_S} we can apply \cite[Theorem~1]{B-quasival} to complete the proof.
We focus on iterated sequences for $\Gr(2,\mathbb C^n)$ and start by computing the images of Pl\"ucker coordinates under the associated valuations.

Let $S=(\beta_1,\dots,\beta_d)$ be a birational sequence for $\Gr(2,\mathbb C^n)$.
Let $U(\lie n^-_S)\subset U(\lie n^-)$ be the subalgebra generated by monomials of form $f_{\beta_1}^{m_1}\dots f_{\beta_d}^{m_{d}}$.
We consider the irreducible highest weight representation $V(\omega_2)=\bigwedge^2\mathbb C^n=U(\mathfrak n_S^-)\cdot(e_1\wedge e_2)$. 
There exists at least one monomial of form ${\bf f}^{m}_S=f_{\beta_1}^{m_1}\dots f_{\beta_{d}}^{m_{d}}$ with the property ${\bf f}_S^m\cdot(e_1\wedge e_2)=e_i\wedge e_j$ for all $1\le i,j\le n$. 
Then by \cite[Proposition~2]{FFL15} we have
\begin{eqnarray}\label{eq: val seq on plucker}
\val_S(\bar p_{ij})=\min{}_{\prec_{\Psi}}\{{ m}\in \mathbb Z^{d}_{\ge 0}\mid {\bf f}_S^m\cdot(e_1\wedge e_2)=e_i\wedge e_j\}.
\end{eqnarray}

\begin{example}\label{exp:seq}
Consider $\Gr(2,\mathbb C^4)$ and the iterated sequence $S=(\varepsilon_{1}-\varepsilon_{4},\varepsilon_{2}-\varepsilon_{4},\varepsilon_{1}-\varepsilon_{3},\varepsilon_{2}-\varepsilon_{3})$, respectively $S'=(\varepsilon_{3}-\varepsilon_{4},\varepsilon_{2}-\varepsilon_{4},\varepsilon_{1}-\varepsilon_{3},\varepsilon_{2}-\varepsilon_{3})$. 
They are birational by Lemma~\ref{lem:birat}, as $(\varepsilon_{1}-\varepsilon_{3},\varepsilon_{2}-\varepsilon_{3})$ is of PBW-type for $\Gr(2,\mathbb C^{3})$. 
We compute the valuation $\val_S$ on Pl\"ucker coordinates. 
There are two monomials sending $e_1\wedge e_2$ to $e_3\wedge e_4$, namely 
\[
{\bf f}_S^{(1,0,0,1)}\cdot(e_1\wedge e_2)={\bf f}_S^{(0,1,1,0)}\cdot(e_1\wedge e_2)=e_1\wedge e_4.
\]
We have $\Psi_S(1,0,0,1)=\Psi_S(0,1,1,0)=4$, but $(1,0,0,1)>_{lex}(0,1,1,0)$. Hence, $\val_S(\bar p_{34})=(1,0,0,1)$. 

For $\val_{S'}$ we compute ${\bf f}_S^{(1,0,0,1)}(e_1\wedge e_2)= {\bf f}_S^{(0,1,0,0)}(e_1\wedge e_2)= e_1\wedge e_4$
Again, we have $\Psi_{S'}(1,0,0,1)=\Psi_{S'}(0,1,0,0)=2$, but as $(1,0,0,1)>_{lex} (0,1,0,0)$ it follows $\val_{S'}(\bar p_{14})=(1,0,0,1)$.
In Table~\ref{tab:val(2,4)} you can find the images of all Pl\"ucker coordinates under $\val_S$ and $\val_{S'}$.
\end{example}

\begin{table}[ht]
\begin{center}
\begin{tabular}{ |c|c|c|}
\hline
Pl\"ucker & $\val_S$ & $\val_{S'}$\\%$f_{\alpha_{1,3}}f_{\alpha_{2,3}}f_{\alpha_{1,2}}f_{\alpha_2}$\\
 \hline
$\bar p_{12}$ & $(0,0,0,0)$ & $(0,0,0,0)$\\
$\bar p_{13}$ & $(0,0,0,1)$ & $(0,0,0,1)$\\
$\bar p_{23}$ & $(0,0,1,0)$ & $(0,0,1,0)$\\
$\bar p_{14}$ & $(0,1,0,0)$ & $(1,0,0,1)$\\
$\bar p_{24}$ & $(1,0,0,0)$ & $(1,0,1,0)$\\
$\bar p_{34}$ & $(1,0,0,1)$ & $(0,1,1,0)$\\
\hline
\end{tabular}
\end{center}\caption{Images of Pl\"ucker coordinates under the valuations $\val_S,\val_{S'}$ associated to $S=(\varepsilon_{1}-\varepsilon_{4},\varepsilon_{2}-\varepsilon_{4},\varepsilon_{1}-\varepsilon_{3},\varepsilon_{2}-\varepsilon_{3})$, respectively $S'=(\varepsilon_{3}-\varepsilon_{4},\varepsilon_{2}-\varepsilon_{4},\varepsilon_{1}-\varepsilon_{3},\varepsilon_{2}-\varepsilon_{3})$ for $\Gr(2,\mathbb C^4)$.}\label{tab:val(2,4)}
\end{table}

\begin{figure}
\begin{center}
\begin{tikzpicture}[scale=.25]
\node at (-5,-0.5) {$T_3=$};

\draw (0,0) -- (0,-3);
\draw (0,0) -- (3,2);
\draw (0,0) -- (-3,2);

\node at (0,-3.6) {\tiny $3$};
\node at (3.5,2.25) {\tiny $2$};
\node at (-3.5,2.25) {\tiny $1$};
\end{tikzpicture}
\end{center}
\caption{labeled trivalent tree with three leaves.}\label{fig:3leaves}
\end{figure}

From now on we fix an iterated birational sequence $S=(\varepsilon_{i_n}-\varepsilon_{n},\varepsilon_{j_n}-\varepsilon_n,\dots,\varepsilon_{i_3}-\varepsilon_3,\varepsilon_{j_3}-\varepsilon_3)$ for $\Gr(2,\mathbb C^n)$, where $i_l,j_l\in[l-1]$ with $i_l\not=j_l$ for all $3\le l\le n$.
%where $(i_k,k)$ represents the positive root $\alpha_{i_k,k-1}=\varepsilon_{i_k}-\varepsilon_k$.
%We chose this notation as it easily encodes the action of $f_{i_k,k-1}\in\lie n^-$ on $\mathbb C^n$ (see Example~\ref{exp: fund reps 1 and 2}), which we need to compute $\val_S$ on Pl\"ucker coordinates.
Then Algorithm~\ref{alg:tree assoc to seq} associates to $S$ a labeled trivalent tree $T_S$ with $n$ leaves.

\begin{algorithm}[h]
\SetAlgorithmName{Algorithm}{} 
\KwIn{\medskip {\bf Input:\ }  An iterated sequence $S=(\varepsilon_{i_n}-\varepsilon_{n},\varepsilon_{j_n}-\varepsilon_n,\dots,\varepsilon_{i_3}-\varepsilon_3,\varepsilon_{j_3}-\varepsilon_3)$, the trivalent tree $T_3$ as in Figure~\ref{fig:3leaves}.}
\BlankLine
{\bf Initialization:} Set $k=4$, $T^S_{3}:=T_3$.\\
\For{$k\le n$}{Construct a tree $T_k^S$ from $T^S_{k-1}$ by replacing the edge with leaf $i_k$ in $T_{k-1}^S$ by three edges forming a cherry with leaves labeled by $i_k$ and $k$.\\
    \If{$k=n$}
    {{\bf Output:\ } The tree $T^S_n$.}
    \Else{Replace $k$ by $k+1$, $T^S_{k-1}$ by $T^S_k$ and start over.}}
\BlankLine
{\bf Output:\ } The tree $T_S:=T^S_n$ and the sequence $\mathbb T_S:=(T_n^S,\dots,T_3^S)$ of trees.
\label{alg:tree assoc to seq}
\caption{Associating a trivalent tree $T_S$ to an iterated sequence $S$.}
\end{algorithm}

\begin{definition}\label{def: T_S and C_S}
To an iterated sequence $S$ we associate the trivalent tree $T_S$ and the sequence of trees $\mathbb T_S=(T^S_n,\dots,T^S_3)$ that are the output of Algorithm~\ref{alg:tree assoc to seq}. 
Denote by $C_S$ the maximal cone in $\trop(\Gr(2,\mathbb C^n))$ corresponding to the tree $T_S$ by \cite[Theorem~3.4]{SS04}.
\end{definition}

\begin{example}\label{exp:treeseq}
Consider $S=(\varepsilon_4-\varepsilon_6,\varepsilon_5-\varepsilon_6,\varepsilon_2-\varepsilon_5,\varepsilon_3-\varepsilon_5,\varepsilon_2-\varepsilon_4,\varepsilon_3-\varepsilon_4,\varepsilon_1-\varepsilon_3,\varepsilon_2-\varepsilon_3)$, an iterated sequence for $\Gr(2,\mathbb C^6)$ .
We construct the trees $\mathbb T_S=(T^S_{3},T_4^S,T_5^S,T^S_6)$ by Algorithm~\ref{alg:tree assoc to seq}. 
Figure~\ref{fig:treeseq} shows the obtained sequence of trees.

\begin{figure}[h]
\begin{center}
\begin{tikzpicture}[scale=.25]
\draw (0,0) -- (0,-2.5);
\draw (0,0) -- (2.5,2);
\draw (0,0) -- (-2.5,2);
\node at (0,-3.1) {\tiny $1$};
\node at (-3,2.5) {\tiny $2$};
\node at (3,2.5) {\tiny $3$};
\draw[->,thick] (3.5,0) -- (6,0);

\begin{scope}[xshift=10cm] %4 leaves
\draw (-3,2)-- (-1.5,0) -- (1.5,0) -- (3,2);
\draw (-3,-2) -- (-1.5,0);
\draw (1.5,0) -- (3,-2);
\node at (3.5,-2.5) {\tiny $1$};
\node at (-3.5,2.5) {\tiny $2$};
\node at (3.5,2.5) {\tiny $3$};
\node at (-3.5,-2.5) {\tiny $4$};
\draw[->,thick] (5,0) -- (7.5,0);

\begin{scope}[xshift=12.5cm]%5 leaves
\draw (-3,2)-- (-1.5,0) -- (4.5,0) -- (6,2);
\draw (-3,-2) -- (-1.5,0);
\draw (4.5,0) -- (6,-2);
\draw (1.5,0) -- (1.5,-2);
\node at (6.5,-2.5) {\tiny $1$};
\node at (-3.5,2.5) {\tiny $2$};
\node at (6.5,2.5) {\tiny $3$};
\node at (1.5,-2.6) {\tiny $4$};
\node at (-3.5,-2.5) {\tiny $5$};
\draw[->,thick] (8,0) -- (10.5,0);

\begin{scope}[xshift=15cm]%6 leaves
\draw (-3,2)-- (-1.5,0) -- (4.5,0) -- (6,2);
\draw (-3,-2) -- (-1.5,0);
\draw (4.5,0) -- (6,-2);
\draw (1.5,0) -- (1.5,-2) -- (0,-4);
\draw (1.5,-2) -- (3,-4);
\node at (6.5,-2.5) {\tiny $1$};
\node at (-3.5,2.5) {\tiny $2$};
\node at (6.5,2.5) {\tiny $3$};
\node at (-.5,-4.5) {\tiny $4$};
\node at (-3.5,-2.5) {\tiny $5$};
\node at (3.5,-4.5) {\tiny $6$};
\end{scope}
\end{scope}
\end{scope}
\end{tikzpicture}
\end{center}
\caption{The sequence $\mathbb T_S$ for $S$ as in Example~\protect{\ref{exp:treeseq}}.}\label{fig:treeseq}
\end{figure}
\end{example}

Let $M_S:=M_{\val_S}$ be the weighting matrix associated to $\val_S$ as in Definition~\ref{def: wt matrix from valuation}.
We compute the initial ideal $\init_{M_S}(\mathcal I_{2,n})$ of the Pl\"ucker ideal $\mathcal I_{2,n}$ to apply \cite[Theorem~1]{B-quasival}. 

\begin{proposition}\label{prop: init M_S= init C_S}
For every iterated sequence $S$ we have $\init_{M_S}(\mathcal I_{2,n})=\init_{C_S}(\mathcal I_{2,n})$.
\end{proposition}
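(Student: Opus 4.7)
My plan is to reduce the equality of ideals to an equality of initial forms of three-term Pl\"ucker relations $R_{r,s,u,v}=p_{rs}p_{uv}-p_{ru}p_{sv}+p_{rv}p_{su}$ ($1\le r<s<u<v\le n$), and then pass back to the ideal level using that these relations form a tropical basis (in fact a universal Gr\"obner basis) for $\mathcal I_{2,n}$ by \cite{SS04}. Thus it suffices to show $\init_{M_S}(R_{r,s,u,v})=\init_{w_{T_S}}(R_{r,s,u,v})$ for every such quadruple. The right-hand side is a binomial in the two monomials achieving the minimum of $-d(r,s)-d(u,v),\ -d(r,u)-d(s,v),\ -d(r,v)-d(s,u)$, where $d(i,j)$ is the number of internal edges between leaves $i$ and $j$ in $T_S$; by the four-point condition for tree metrics, two of these weights coincide and are strictly smaller than the third.

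The central intermediate claim I would establish is the following: for every $r<s<u<v$, the three vectors $A:=\val_S(\bar p_{rs})+\val_S(\bar p_{uv})$, $B:=\val_S(\bar p_{ru})+\val_S(\bar p_{sv})$, $C:=\val_S(\bar p_{rv})+\val_S(\bar p_{su})$ in $\mathbb Z^{d}$ satisfy (i) $\Psi(A)=\Psi(B)=\Psi(C)$; (ii) exactly two of them are equal as vectors in $\mathbb Z^d$ and these two are strictly lex-larger than the third; (iii) the ``odd-one-out'' is precisely the pairing of $\{r,s,u,v\}$ with the smallest tree-distance sum in $T_S$. Part (i) is automatic, because $\mathbf f_S^m(v_{\omega_2})\propto e_i\wedge e_j$ forces $\sum_l m_l\beta_l=\varepsilon_1+\varepsilon_2-\varepsilon_i-\varepsilon_j$, whence $\Psi(\val_S(\bar p_{ij}))=\height(\varepsilon_1+\varepsilon_2-\varepsilon_i-\varepsilon_j)$ depends only on $(i,j)$ and summing gives $\Psi(A)=\Psi(B)=\Psi(C)=r+s+u+v-6$. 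Hence the $\prec_\Psi$-comparison among $A,B,C$ collapses to the lex tie-breaker.

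I would prove (ii) and (iii) by induction on $n$, exploiting the recursive structure of iterated sequences. The base case $n=4$ is a routine finite check on the six iterated sequences (two of which appear in Example~\ref{exp:seq} and Table~\ref{tab:val(2,4)}). For the inductive step, write $S=(\varepsilon_{i_n}-\varepsilon_n,\varepsilon_{j_n}-\varepsilon_n,S'')$ with $S''$ iterated for $\Gr(2,\mathbb C^{n-1})$; by Algorithm~\ref{alg:tree assoc to seq}, $T_S$ is obtained from $T_{S''}$ by grafting a cherry $\{i_n,n\}$ at leaf $i_n$. If $v<n$, formula \eqref{eq: val seq on plucker} gives $\val_S(\bar p_{ab})=(0,0,\val_{S''}(\bar p_{ab}))$ for all $a,b\in[n-1]$, and the claim for $R_{r,s,u,v}$ reduces immediately to the inductive hypothesis applied to $S''$ and $T_{S''}$. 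If $v=n$, each $\val_S(\bar p_{an})$ for $a\in[n-1]$ is computed from two candidate lifts, ``produce $e_{i_n}\wedge e_a$ via $S''$, then apply $f_{i_n,n}$'' versus ``produce $e_{j_n}\wedge e_a$ via $S''$, then apply $f_{j_n,n}$''; the $\prec_\Psi$-minimum of the two yields $\val_S(\bar p_{an})$. A case analysis on how $\{i_n,j_n\}$ meets $\{r,s,u\}$ and on the relative order of these indices then forces exactly two of $A,B,C$ to coincide, with the cherry $\{i_n,n\}$ selecting the ``odd-one-out''.

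The main obstacle is precisely this case $v=n$: the lex tie-breaker in $\prec_\Psi$ interacts delicately with the binary choice between $f_{i_n,n}$ and $f_{j_n,n}$, and one must track how these choices propagate across the three monomials of $R_{r,s,u,v}$ so as to produce two exactly matching vectors in $\mathbb Z^d$ (not merely matching $\Psi$-values). Once this combinatorial bookkeeping is carried out, verifying that the excluded pairing is precisely the one of smallest $T_S$-distance sum is a direct translation of the cherry structure into the four-point condition, and the proposition follows.
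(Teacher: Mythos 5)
Your overall route is the same as the paper's: reduce to the three-term relations $R_{r,s,u,v}$ (using that, by the proof of Theorem~3.4 in \cite{SS04}, $\init_{C_S}(\mathcal I_{2,n})$ is generated by the $\init_{C_S}(R_{r,s,u,v})$), and prove the equality of initial forms by induction on $n$ along the iterated structure, with the $v<n$ case reducing to $S''$ via $\val_S(\bar p_{ab})=(0,0,\val_{S''}(\hat p_{ab}))$. Your observation that $\Psi$ is linear in the weight, so $\Psi(\val_S(\bar p_{ij}))$ depends only on $i+j$ and all three monomials of a Pl\"ucker relation have equal $\Psi$-value, is a clean way to see why everything collapses to the lex tie-break; the paper gets this implicitly from its explicit formulas.

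The gap is that you stop exactly where the proof has to do its work: you declare the $v=n$ case a ``delicate'' bookkeeping problem and do not resolve it, whereas it in fact admits a closed-form answer that trivializes the case analysis. With $S=(\varepsilon_{i_n}-\varepsilon_n,\varepsilon_{j_n}-\varepsilon_n,S'')$, your two candidate lifts for $\bar p_{ln}$, $l\neq i_n$, are $(1,0,\val_{S''}(\hat p_{l\,i_n}))$ and $(0,1,\val_{S''}(\hat p_{l\,j_n}))$; they have the same $\Psi$-value, and since the $\prec_\Psi$-minimum among equal $\Psi$-values is the lex-\emph{largest} vector, the $f_{i_n,n}$-lift always wins. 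Hence $\val_S(\bar p_{ln})=(1,0,\val_{S''}(\hat p_{l\,i_n}))$ for all $l\neq i_n$ and $\val_S(\bar p_{i_nn})=(0,1,\val_{S''}(\hat p_{i_nj_n}))$; in particular $j_n$ is irrelevant to the comparison, so your case split on how $\{i_n,j_n\}$ meets $\{r,s,u\}$ collapses to whether $i_n\in\{r,s,u\}$. If $i_n\notin\{r,s,u\}$, stripping the common prefix $(1,0)$ reduces $R_{r,s,u,n}$ to the relation with $n$ replaced by $i_n$, and the tree side matches because the cherry $\{i_n,n\}$ gives $(w_{T_S})_{ln}=(w_{T_S})_{l\,i_n}$, so induction applies; if $i_n\in\{r,s,u\}$, the formulas give directly that the two mixed terms have \emph{equal} valuation vectors while the term containing $p_{i_nn}$ is strictly $\prec_\Psi$-larger, matching the cherry split. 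Without this step your claim (ii)---that two of $A,B,C$ coincide as vectors in $\mathbb Z^d$, not just in $\Psi$-value---remains unproved, and that is the whole content of the proposition. A smaller point: the three-term relations are not known (and not needed) to be a universal Gr\"obner basis; what you actually use is the SS04 fact for maximal cones quoted above, together with the standard argument (via \cite[Lemma~3]{B-quasival} and equality of Hilbert functions, or primality of $\init_{C_S}$) to upgrade the resulting containment $\init_{C_S}(\mathcal I_{2,n})\subseteq\init_{M_S}(\mathcal I_{2,n})$ to an equality.
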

\begin{proof}
Let $\{e_{ij}\}_{(i<j)\in I_{2,n}}$ be the standard basis of $\mathbb R^{\binom{n}{2}}$. Adopting the notation for monomials in the polynomial ring $\mathbb C[p_{ij}]_{i<j}$ we have
\[
R_{i,j,k,l}=p_{ij}p_{kl}-p_{ik}p_{jl}+p_{il}p_{jk}=\p^{e_{ij}+e_{kl}}-\p^{e_{ik}+e_{jl}}+\p^{e_{il}+e_{jk}}. 
\]
In particular, $M_S(e_{ij}+e_{kl})=\val_S(\bar p_{ij})+\val_S(\bar p_{kl})=\val_S(\bar p_{ij}\bar p_{kl})$ and $\init_{M_S}(R_{i,j,k,l})$ is the sum of those monomials in $R_{i,j,k,l}$ for which the valuation $\val_S$ of the corresponding monomials in $A_{2,n}$ is minimal with respect to $\prec_{\Psi}$.

Recall that $\init_{C_S}(\mathcal I_{2,n})=\langle \init_{C_S}(R_{i,j,k,l})\mid 1\le i<j<k<l\le n \rangle$ by \cite[Proof of Theorem~3.4]{SS04}. 
This implies that it is enough to prove the following claim.

\smallskip\noindent
\underline{\emph{Claim:}} For every Pl\"ucker relation $R_{i,j,k,l}$ with $1\le i<j<k<l\le n$ we have $\init_{C_S}(R_{i,j,k,l})=\init_{M_S}(R_{i,j,k,l})$. 

\smallskip\noindent
\underline{\emph{Proof of claim:}} We proceed by induction. For $n=4$ let $S=(\varepsilon_i-\varepsilon_4,\varepsilon_j-\varepsilon_4,\varepsilon_{i_3}-\varepsilon_3,\varepsilon_{j_3}-\varepsilon_3)$, i.e. the tree $T_S$ has a cherry labeled by $i$ and $4$. 
Consider the Pl\"ucker relation $R_{i,j,k,4}=p_{ij}p_{k4}-p_{ik}p_{j4}+p_{i4}p_{jk}$ with $\{i,j,k\}=[3]$. Then
\[
\init_{C_S}(R_{i,j,k,4})=p_{ij}p_{k4}-p_{ik}p_{j4}.
\]
Let $S'=(\varepsilon_{i_3}-\varepsilon_3,\varepsilon_{j_3}-\varepsilon_3)$ be the sequence for $\Gr(2,\mathbb C^{3})$ and denote by $\hat p_{rs}$ with $(r<s)\in I_{2,3}$ the Pl\"ucker coordinates in $A_{2,3}$. 
For $m=(m_{d-2},\dots,m_1)\in\mathbb Z^{d-2}$ and $m_d,m_{d-1}\in\mathbb Z$ write $(m_d,m_{d-1},m):=(m_d,m_{d-1},m_{d-2},\dots,m_1)$
We compute
\[
\val_S(\bar p_{i4})=(0,1,\val_{S'}(\hat p_{ij})),\ \
\val_S(\bar p_{j4})=(1,0,\val_{S'}(\hat p_{ij})),\ \text{  and  } \ 
\val_S(\bar p_{k4})=(1,0,\val_{S'}(\hat p_{ik})).
\]
This implies $\val_S(\bar p_{i4}\bar p_{jk})\succ_{\Psi}\val_S(\bar p_{ij}\bar p_{k4})=\val_S(\bar p_{ik}\bar p_{j4})$, and hence
$\init_{M_S}(R_{i,j,k,4})=\init_{C_S}(R_{i,j,k,4}).$

Assume the claim is true for $n-1$ and let $S=(\varepsilon_{i_n}-\varepsilon_n,\varepsilon_{j_n}-\varepsilon_n,\dots,\varepsilon_{i_3}-\varepsilon_3,\varepsilon_{j_3}-\varepsilon_3)$ be an iterated sequence for $\Gr(2,\mathbb C^n)$.  
Then $S'=(\varepsilon_{i_{n-1}}-\varepsilon_{n-1},\varepsilon_{j_{n-1}}-\varepsilon_{n-1},\dots,\varepsilon_{i_3}-\varepsilon_3,\varepsilon_{j_3}-\varepsilon_3)$ is an iterated sequence for $\Gr(2,\mathbb C^{n-1})$.
Denote by $\hat p_{ij}$ with $(i<j)\in I_{2,n-1}$ the Pl\"ucker coordinates in $A_{2,n-1}$.
As $\val_S(\bar p_{ij})=(0,0,\val_{S'}(\hat p_{ij}))$ for $i,j<n$ we deduce $\init_{C_S}(R_{i,j,k,l})=\init_{M_S}(R_{i,j,k,l})$ with $i,j,k,l<n$ by induction.
Consider a Pl\"ucker relation of form $R_{i,j,k,n}$ and compute
\[
\val_S(\bar p_{ln})=\left\{\begin{matrix}
(1,0,\val_{S'}(\hat p_{ri_n})), & \text{ if } l\not=i_n,\\
(0,1,\val_{S'}(\hat p_{i_nj_n})),& \text{ if } l=i_n.
\end{matrix}\right.
\]
As $(i_n,n)$ is a cherry in $T_S$ we observe that the associated weight vector $w_{T_S}\in C_S^\circ\subset\trop(\Gr(2,\mathbb C^n))$ satisfies $(w_{T_S})_{ln}=(w_{T_S})_{li_n}=(w_{T_{S'}})_{li_n}-1$.
In particular, for $i,j,k\not=i_n$ we have by induction $\init_{M_S}(R_{i,j,k,n})=\init_{C_S}(R_{i,j,k,n})$.
The only relations left to consider are of form $R_{i_n,j,k,n}$  for $j,k\in[n-1]\setminus\{i_n\}$. For $M_S$ we compute by the above
\[
\val_S(\bar p_{i_nj}\bar p_{kn})=\val_S(\bar p_{i_nk}\bar p_{jn})\succ_{\Psi} \val_S(\bar p_{i_nn}\bar p_{jk}).
\]
Hence, $\init_{M_S}(R_{i_n,j,k,n})=p_{i_nj}p_{kn}-p_{i_nk}p_{jn}$. As $(i_n,n)$ labels a cherry in $T_S$ we also have $\init_{C_S}(R_{i_n,j,k,n})=\init_{M_S}(R_{i_n,j,k,n})$.
\end{proof}

\begin{theorem}\label{thm:main birat}
For every iterated sequence $S$ we have $\gr_S(A_{2,n})\cong \mathbb C[p_{ij}]_{i<j}/\init_{C_S}(\mathcal I_{2,n})$.
Conversely, for every maximal prime cone $C\subset \trop(\Gr(2,\mathbb C^n))$ there exists a birational sequence $S$, such that $\mathbb C[p_{ij}]_{i<j}/\init_C(\mathcal I_{2,n})\cong \gr_S(A_{2,n})$.
\end{theorem}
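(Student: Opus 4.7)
The proof has two directions, both of which rest on machinery already assembled in the paper.

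\emph{Forward direction.} Given an iterated sequence $S$, the plan is to combine Proposition~\ref{prop: init M_S= init C_S} with Theorem~\ref{thm: seq and trop k}. The former identifies $\init_{M_S}(\mathcal{I}_{2,n})=\init_{C_S}(\mathcal{I}_{2,n})$; since $C_S$ is a maximal cone of $\trop(\Gr(2,\mathbb{C}^n))$, \cite[Corollary~4.4]{SS04} asserts that this ideal is toric and in particular prime. The primeness hypothesis of Theorem~\ref{thm: seq and trop k} is therefore satisfied, and yields the isomorphism $\gr_S(A_{2,n})\cong \mathbb{C}[p_{ij}]_{i<j}/\init_{C_S}(\mathcal{I}_{2,n})$ immediately.

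\emph{Converse direction.} Starting from a maximal prime cone $C\subset\trop(\Gr(2,\mathbb{C}^n))$, use \cite[Theorem~3.4]{SS04} to write $C=C_T$ for a labeled trivalent tree $T\in\mathcal{T}_n$. The plan is to construct an iterated sequence $S$ such that $T_S$ and $T$ have the same underlying unlabeled trivalent tree, i.e.\ lie in the same $S_n$-orbit. Once this is done, the $S_n$-action on Pl\"ucker coordinates induces a ring automorphism of $\mathbb{C}[p_{ij}]_{i<j}$ sending $\init_{C_S}(\mathcal{I}_{2,n})$ to $\init_C(\mathcal{I}_{2,n})$, and combining with the forward direction yields $\gr_S(A_{2,n})\cong \mathbb{C}[p_{ij}]_{i<j}/\init_C(\mathcal{I}_{2,n})$.

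\emph{Realising every unlabeled tree, by induction on $n$.} The base $n=4$ has a unique unlabeled trivalent tree, realised by any iterated sequence (see Example~\ref{exp:seq}). For the step, Lemma~\ref{lem:cherry} supplies a cherry of the unlabeled tree underlying $T$; after applying a suitable $\sigma\in S_n$ we may assume this cherry has leaves labeled $i_n$ and $n$ in $T$. Delete the leaf $n$ together with its edge and contract the resulting valence-$2$ vertex to obtain a labeled trivalent tree $T'$ with leaves $[n-1]$. By the inductive hypothesis there is an iterated sequence $S'$ for $\Gr(2,\mathbb{C}^{n-1})$ whose tree $T_{S'}$ lies in the same $S_{n-1}$-orbit as $T'$; fix $\tau\in S_{n-1}$ with $\tau(T_{S'})=T'$ and set $i'_n:=\tau^{-1}(i_n)$. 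Then
\[
S := (\varepsilon_{i'_n}-\varepsilon_n,\ \varepsilon_{j_n}-\varepsilon_n,\ S'),
\]
for any $j_n\in[n-1]\setminus\{i'_n\}$, is iterated and birational by Lemma~\ref{lem:birat}. Algorithm~\ref{alg:tree assoc to seq} attaches a cherry $(i'_n,n)$ at the leaf $i'_n$ of $T_{S'}$, producing $T_S$; extending $\tau$ to $S_n$ by $\tau(n)=n$ gives $\tau(T_S)=T$, so $T_S$ and $T$ share the same underlying unlabeled tree.

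\emph{Main obstacle.} The delicate point is the inductive step in the converse: the construction of iterated sequences is rigid, as Algorithm~\ref{alg:tree assoc to seq} only extends a tree by attaching a cherry at an existing leaf, so one needs to know that every trivalent tree is obtained from a smaller one by such a cherry-attachment. This is exactly the content of Lemma~\ref{lem:cherry}, and the residual labelling freedom is absorbed by the $S_{n-1}$-action before the two new roots are prepended.
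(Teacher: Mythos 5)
Your proposal is correct and follows essentially the same route as the paper: the forward direction combines Proposition~\ref{prop: init M_S= init C_S} with the primeness from \cite[Corollary~4.4]{SS04} and the Khovanskii-basis criterion (the paper cites \cite[Theorem~1]{B-quasival} directly, which is what the ``moreover'' part of Theorem~\ref{thm: seq and trop k} encodes), and the converse is the same induction via Lemma~\ref{lem:cherry}, removing a cherry and prepending $(\varepsilon_i-\varepsilon_n,\varepsilon_j-\varepsilon_n)$ to the inductively obtained sequence. Your only deviation is cosmetic: you make the labeling bookkeeping explicit through the $S_{n-1}$- and $S_n$-actions, where the paper works directly with unlabeled tree shapes.
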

\begin{proof}
Let $S$ be an iterated sequence. By Corollary~\ref{cor:full rank} we can apply \cite[Theorem~1]{B-quasival} and get
\begin{eqnarray*}
\begin{matrix}
\gr_S(A_{2,n}) &\cong & %&\cong & \mathbb C[S(A_{2,n},\val_S)]
 %   &\stackrel{\text{\cite[Theorem~1]{B-quasival}}}{\cong}& % \mathbb C[S(A_{2,n},\val_{M_S})] \\
  %  &\cong & \gr_{M_S}(A_{2,n})
  %  &\stackrel{\text{Remark~\ref{rmk:ass graded matrix}}}{\cong}& 
  \mathbb C[p_{ij}]_{i<j}/\init_{M_S}(\mathcal I_{2,n}) 
    &\stackrel{\text{Proposition~\ref{prop: init M_S= init C_S}}}{=}& \mathbb C[p_{ij}]_{i<j}/\init_{C_S}(\mathcal I_{2,n}).
    \end{matrix}
\end{eqnarray*}

For the second we pursue by induction:
for $n=3$ there is only one maximal cone $C\subset\trop(\Gr(2,\mathbb C^3))$. The sequence $S=(\varepsilon_1-\varepsilon_3,\varepsilon_2-\varepsilon_3)$ satisfies $C_S=C$.
Assume the claim is true for $n-1$ and consider a maximal cone $C\subset\trop(\Gr(2,\mathbb C^n))$ with associated tree $T_C$.
Let $\mathtt T_C$ denote the underlying unlabeled tree.
Find a cherry in $\mathtt T_C$ and remove it. 
Denote by $\mathtt T_C'$ the obtained tree with $n-1$ leaves.
By induction there exist an iterated sequence $S'$ for $\Gr(2,\mathbb C^{n-1})$ with associated tree $T_{S'}$ of shape $\mathtt T_C'$.
Now add back the cherry and label the additional leaf by $n$.
The cherry is labeled by some $i<n$ and $n$.
Hence, $S:=(\varepsilon_i-\varepsilon_n,\varepsilon_j-\varepsilon_n,S')$ for arbitrary $j<n$ with $i\not =j$ is an iterated sequence with $T_S$ of shape $\mathtt T_C$ and therefore $\gr_S(A_{2,n})\cong \mathbb C[p_{ij}]_{i<j}/\init_C(\mathcal I_{2,n})$.
\end{proof}

For an iterated sequence $S$ for $\Gr(2,\mathbb C^n)$ denote by $\mathtt T_i^S$ the (unlabeled) trivalent tree underlying the labeled trivalent tree $T^S_i$ with $i$ leaves in the tree sequence $\mathbb T_S$.
Algorithm~\ref{alg:tree assoc to seq} provides a tool for comparing whether two iterated sequences induce isomorphic flat toric degenerations. 
Construct $\mathbb T_{S_1},\mathbb T_{S_2}$ for two such sequences $S_1,S_2$ and consider $\mathtt T^{S_1}_n$ and $\mathtt T^{S_2}_n$. If $\mathtt T^{S_1}_n$ and $\mathtt T^{S_2}_n$ coincide then 
\begin{eqnarray}\label{eq:cong init}
\init_{T_{n}^{S_1}}(\mathcal I_{2,n})\cong\init_{T_n^{S_2}}(\mathcal I_{2,n}).
\end{eqnarray}

Let $\mathcal H_d:=\conv(e\in\{0,1\}^d)$ be the hypercube in $\mathbb R_{\ge 0}^d$.

\begin{corollary}\label{cor:hypersimplex}
For every iterated sequence $S$ for $\Gr(2,\mathbb C^n)$ the Newton-Okounkov polytope has $\binom{n}{2}$ vertices of form $\val_S(\bar p_{ij})$ for $1\le i<j\le n$. Further, it has no interior lattice points and satisfies
\[
\Delta(A_{2,n},\val_S)\subset \mathcal H_d.
\]
Moreover, if $S_1,S_2$ are two different iterated sequences for $\Gr(2,\mathbb C^n)$ with
$\mathtt T_n^{S_1}=\mathtt T_n^{S_2}$, then
the Newton-Okounkov polytopes are unimodularly equivalent\footnote{Two polytopes $P,Q\subset \mathbb R^d$ are called \emph{unimodularly equivalent} if there exists matrix $M\in GL_d(\mathbb Z)$ and $w\in \mathbb Z^d$
$ Q=f_M(P)+w $,
where $f_M(x)=xM$ for $x\in \mathbb R^d$. It is denoted $Q\cong P$ and by \cite[\S2.1 and \S2.3]{CLS11} implies that the associated projective toric varieties are isomorphic.}.
\end{corollary}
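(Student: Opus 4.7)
The plan is to deduce the statement from Theorem~\ref{thm:main birat} and the theorem of \cite{B-quasival} stated at the end of Section~\ref{sec:pre valuations}. Proposition~\ref{prop: init M_S= init C_S} guarantees that $\init_{M_S}(\mathcal I_{2,n})$ is a binomial prime, so that theorem applies and yields $\Delta(A_{2,n},\val_S) = \conv\{\val_S(\bar p_{ij}) \mid 1 \le i < j \le n\}$ with $\{\bar p_{ij}\}$ a Khovanskii basis. What remains is (i) to verify that these $\binom{n}{2}$ vectors are pairwise distinct and all lie in $\{0,1\}^d$, (ii) to deduce from (i) the vertex count, the containment in $\mathcal H_d$ and the absence of interior lattice points, and (iii) to prove unimodular equivalence when $\mathtt T_n^{S_1} = \mathtt T_n^{S_2}$.

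For (i), I would induct on $n$ using the recursion extracted in the proof of Proposition~\ref{prop: init M_S= init C_S}. Writing $S = (\varepsilon_{i_n}-\varepsilon_n, \varepsilon_{j_n}-\varepsilon_n, S')$ and denoting by $\hat p_{ij}$ the Pl\"ucker coordinates of $\Gr(2,\mathbb C^{n-1})$, one has $\val_S(\bar p_{ij}) = (0,0,\val_{S'}(\hat p_{ij}))$ for $i,j<n$, $\val_S(\bar p_{ln}) = (1,0,\val_{S'}(\hat p_{l\,i_n}))$ for $l \ne i_n$, and $\val_S(\bar p_{i_n n}) = (0,1,\val_{S'}(\hat p_{i_n j_n}))$. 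The base case $n=3$ is a direct check (the three values are $(0,0)$, $(0,1)$, $(1,0)$ up to a swap in the sequence). The $\{0,1\}$-property is preserved because each iteration prepends two entries from $\{0,1\}$. Distinctness splits into three classes according to the first two coordinates $(0,0)$, $(1,0)$, $(0,1)$; within the first two classes it follows from the inductive hypothesis for $S'$ (in the second class applied to pairs $(l, i_n)$ with $l \in [n-1]\setminus\{i_n\}$), and the third class is a singleton.

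For (ii), any $\{0,1\}$-vector $v$ belonging to a polytope $P \subset \mathcal H_d$ is automatically an extreme point of $P$: a non-trivial convex combination $v = \sum \lambda_i v_i$ with $v_i \in P$, $v_i \ne v$, is forced, coordinate by coordinate, to have each $v_i$ agree with $v$ wherever $v \in \{0,1\}$. Combined with (i), this promotes all $\binom{n}{2}$ distinct points $\val_S(\bar p_{ij})$ to vertices of $\Delta(A_{2,n},\val_S) \subset \mathcal H_d$. Since $\Delta$ is full-dimensional, its interior is open in $\mathbb R^d$ and contained in $\mathcal H_d$, hence in the open cube $(0,1)^d$, which contains no lattice points of $\mathbb Z^d$; so $\Delta$ has no interior lattice points.

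Step (iii) is the main conceptual hurdle. The hypothesis $\mathtt T_n^{S_1} = \mathtt T_n^{S_2}$ means $T_n^{S_1}$ and $T_n^{S_2}$ are related by a leaf permutation $\sigma \in S_n$, and the $S_n$-action of Section~\ref{sec:tropical} sends $\init_{T_n^{S_1}}(\mathcal I_{2,n})$ to $\init_{T_n^{S_2}}(\mathcal I_{2,n})$ via a $\mathbb C$-algebra automorphism of $\mathbb C[p_{ij}]_{i<j}$ that only permutes the variables up to sign. By Theorem~\ref{thm:main birat} this descends to an isomorphism of graded $\mathbb C$-algebras $\gr_{S_1}(A_{2,n}) \to \gr_{S_2}(A_{2,n})$ realised by a torus-equivariant, polarisation-preserving isomorphism of the two special fibres embedded in $\mathbb P^{\binom{n}{2}-1}$. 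The delicate part is to promote this algebraic isomorphism to a lattice automorphism of $\mathbb Z^d$ carrying the vertex set $\{\val_{S_1}(\bar p_{ij})\}$ to $\{\val_{S_2}(\bar p_{\sigma^{-1}(i)\sigma^{-1}(j)})\}$ up to a translation; this can be done by tracking the recursion of (i) under $\sigma$, after which the standard toric dictionary in \cite[\S2.1 and \S2.3]{CLS11} delivers unimodular equivalence of the Newton--Okounkov polytopes.
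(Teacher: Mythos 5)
Your steps (i) and (ii) are correct and follow essentially the paper's own route: the $\{0,1\}$-property and distinctness of the values $\val_S(\bar p_{ij})$ come out of the recursion in the proof of Proposition~\ref{prop: init M_S= init C_S}, and a $\{0,1\}$-point of a subpolytope of $\mathcal H_d$ is automatically a vertex, giving the vertex count, the containment $\Delta(A_{2,n},\val_S)\subset\mathcal H_d$ and the absence of interior lattice points. The gap is in step (iii). There you reduce the unimodular equivalence to ``promote this algebraic isomorphism to a lattice automorphism of $\mathbb Z^d$ carrying one vertex set to the other up to translation,'' and assert this ``can be done by tracking the recursion of (i) under $\sigma$.'' That assertion is exactly the statement to be proved, and the recursion does not give it for free: two iterated sequences $S_1,S_2$ with $\mathtt T_n^{S_1}=\mathtt T_n^{S_2}$ may build the same unlabeled tree by attaching cherries in completely different orders, so the coordinate blocks of $\mathbb Z^d$ (two coordinates per iteration step) that the recursions of $\val_{S_1}$ and $\val_{S_2}$ populate need not correspond under the leaf permutation $\sigma$, and the bijection $\val_{S_1}(\bar p_{ij})\mapsto\val_{S_2}(\bar p_{\sigma^{-1}(i)\sigma^{-1}(j)})$ is precisely the map whose extension to an element of $GL_d(\mathbb Z)$ plus a translation you would have to construct. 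Likewise, an isomorphism of the two special fibres (which are in general non-normal, while $\Delta(A,\val)$ is the polytope of the normalization) does not by itself hand you such a lattice map; citing the toric dictionary of \cite{CLS11} at that point is circular, since that dictionary is what you would use \emph{after} the unimodular equivalence is established, not to establish it.

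The paper closes this step differently: it passes from the toric initial ideals $\init_{C_{S_i}}(\mathcal I_{2,n})$ to their lattice ideals and invokes \cite[Lemma~2 and proof of Theorem~4]{BLMM}, which reconstruct, from the lattice ideal alone, a point configuration that is unimodularly equivalent to the Newton--Okounkov polytope $\Delta(A_{2,n},\val_{S_i})$. Since $\mathtt T_n^{S_1}=\mathtt T_n^{S_2}$ gives $\init_{T_n^{S_1}}(\mathcal I_{2,n})\cong\init_{T_n^{S_2}}(\mathcal I_{2,n})$ via the sign-twisted permutation of Pl\"ucker variables (equation~\eqref{eq:cong init}), the two lattice ideals agree up to a coordinate permutation and the equivalence $\Delta(A_{2,n},\val_{S_1})\cong\Delta(A_{2,n},\val_{S_2})$ follows. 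So either import that result (or prove an analogue of it), or genuinely carry out the inductive construction of the affine unimodular map, dealing with the mismatch of cherry orders; as written, part (iii) of your argument is a sketch with its central step missing.
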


\begin{proof}
By Theorem~\ref{thm:main birat} the value semi-group $S(A_{2,n},\val_S)$ is generated by $\{\val_S(\bar p_{ij})\}_{i<j}$.
This implies $\Delta(A_{2,n},\val_S)=\conv(\val(\bar p_{ij}))_{i<j}$.
Observe from the proof of Proposition~\ref{prop: init M_S= init C_S} that $\val_S(\bar p_{ij})\in\{0,1\}^{d}$.
Therefore, $\val_S(\bar p_{ij})\not\in \conv(\val_S(\bar p_{kl})\vert (k<l)\not=(i<j))$.
Hence, for all $1\le i<j\le n$ every $\val_S(\bar p_{ij})$ is indeed a vertex and $\Delta(A_{2,n},\val_S)\subset \mathcal H_d$.

Regarding the second statement, let $S_1,S_2$ be two iterated sequences.
%If the associated Newton-Okounkov polytopes are unimodularly equivalent they define isomorphic toric varieties.
%Then by \eqref{eq:cong init} we have $\mathtt T^{S_1}_n=\mathtt T^{S_2}_n$.
Assume we have $\mathtt T^{S_1}_n=\mathtt T_n^{S_2}$ and
consider the lattice ideals associated to each $\init_{C_{S_i}}(\mathcal I_{2,n})$ (see e.g. \cite[Lemma~2]{BLMM}).
The corresponding lattice points $\{l^i_j\}_{j=1,\dots,\binom{n}{2}}$ define polytopes in their ambient lattice which are unimodularly equivalent to $\Delta(A_{2,n},\val_{S_i})$ (see \cite[proof of Theorem~4]{BLMM} for the precise construction).
Therefore, \eqref{eq:cong init} implies $\Delta(A_{2,n},\val_{S_1})\cong \Delta(A_{2,n},\val_{S_2})$.
\end{proof}

%From \cite{SS04} we know that there is a bijection between maximal cones in $\trop(\Gr(2,\mathbb C^n))$ and labeled trivalent trees with $n$ leaves. 
Given a maximal cone $C_{T}\in\trop(\Gr(2,\mathbb C^n))$ with associated labeled trivalent tree $T$ the rays of $C$ are in bijection with \emph{internal edges} of $T$, i.e. those not adjacent to any leaf.
Internal edges give rise to partitions of $[n]$ into two sets (each of cardinality $\ge 2$) corresponding to the labelings of the leaves on either side of the edge.
Given a partition $(A,B)$ of $[n]$ the ray is $E_{A,B}=-\sum_{i\in A}\sum_{j\in B} e_{ij}\in \mathbb R^{\binom{n}{2}}$, see \cite[page 396]{SS04}.

\begin{corollary}\label{cor:seq of cones}
Let $S$ be an iterated sequence $S$ for $\Gr(2,\mathbb C^n)$ and 
for $n\ge 4$ we consider the projection $\pi_{n}:\mathbb R^{\binom{n}{2}}\to\mathbb R^{\binom{n-1}{2}}$ onto all coordinates that do not involve $n$. 
Then $\mathbb T_S=(T_3,T_4^S,\dots,T_n^S)$ corresponds to a sequence of cones $(C_{T_3},C_{T_4^S},\dots,C_{T_n^S})$ with $C_{T_i^S}\in \trop(\Gr(2,\mathbb C^i))$ and $\pi_i(C_{T_i^S})=C_{T_{i-1}^S}$.
\end{corollary}
\begin{proof}
The first property follows from Theorem~\ref{thm:main birat}.
For the second notice that if $(A,B)$ is a partition of $[n]$ then $(A\cap [n-1],B\cap[n-1])$ is a partition of $[n-1]$.
Moreover, $\pi_n(E_{A,B})=E_{A\cap[n-1],B\cap[n-1]}\in \mathbb R^{\binom{n-1}{2}}$.
For every $4\le i\le n$ the trees $T_i^S$ and $T_{i-1}^S$ have all interior edges in common, except the one adjacent to the cherry added when constructing $T_{i}^S$ from $T_{i-1}^S$. 
Hence, the rays of $C_{T_i^S}$ are mapped to the rays of $C_{T_{i-1}^S}$ by $\pi_n$.
\end{proof}

\begin{remark}
In \cite[Proposition~8.16]{KM16} the authors describe an initial ideal $\init_M(I)$ for an ideal $I\subset \mathbb C[x_1\dots,x_n]$ with respect to a matrix $M\in \mathbb R^{n\times d}$ as $\init_{u_1}(\dots\init_{u_d}(I)\dots)$, where $u_1,\dots,u_d$ are the columns of $M$.
This leads to a sequence of initial ideals of $I$, namely $I,\init_{u_d}(I),\init_{u_{d-1}}(\init_{u_d}(I))$ and so on.
The sequence of cones $(C_{T_3},C_{T_4^S},\dots,C_{T_n^S})$ from Corollary~\ref{cor:seq of cones} also yields a sequence of initial ideals
$\init_{T_3}(\mathcal I_{2,3}),\init_{T_4^S}(\mathcal I_{2,4}),\dots,\init_{T_{n}^S}(\mathcal I_{2,n})$.
This sequence is different in nature from the first as we are taking initial ideals of different ideals.
\end{remark}

The following definition allows us to interpret iterated sequences for $\Gr(2,\mathbb C^n)$ in a combinatorial way in Corollary~\ref{cor:treegraph} below.

\begin{definition}
The \emph{tree graph} $\mathcal T$ is an infinite graph whose vertices at level $i\ge 3$ correspond to trivalent trees with $i$ leaves. There is an arrow $\mathtt T\to \mathtt T'$, if $\mathtt T$ has $i$ leaves, $\mathtt T'$ has $i+1$ leaves and $\mathtt T'$ can be obtained from $\mathtt T$ by attaching a new boundary edge to the middle of some edge of $\mathtt T$. There is a unique source $\mathtt T_3$ at level $3$ (see Figure~\ref{fig:treegraph}).
\end{definition}

\begin{corollary}\label{cor:treegraph}
Every iterated sequence $S$ for $\Gr(2,\mathbb C^n)$ corresponds to a path from $\mathtt T_3$ to $\mathtt T^S_n$ in the tree graph $\mathcal T$.
\end{corollary}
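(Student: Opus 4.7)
The plan is to unwind Algorithm~\ref{alg:tree assoc to seq} and compare its step-by-step output with the definition of an arrow in the tree graph $\mathcal T$. Given an iterated sequence $S$, the algorithm produces the sequence of labeled trivalent trees $\mathbb T_S=(T_n^S,\ldots,T_3^S)$, initialized with $T_3^S=T_3$. Forgetting the leaf labels yields the sequence of unlabeled trivalent trees $(\mathtt T_n^S,\ldots,\mathtt T_3^S)$, and $\mathtt T_3^S=\mathtt T_3$ is exactly the unique source of $\mathcal T$ at level $3$. The goal is therefore to show that each consecutive pair $(\mathtt T_{k-1}^S, \mathtt T_k^S)$ is an edge of $\mathcal T$ for $4\le k\le n$.

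For the inductive step, I would fix $4\le k\le n$ and inspect the operation performed by the algorithm: the labeled tree $T_k^S$ is obtained from $T_{k-1}^S$ by taking the (boundary) edge carrying leaf $i_k$, subdividing it with a new internal vertex, and attaching a new boundary edge ending in the new leaf labeled by $k$. After forgetting labels, this is exactly the move ``attach a new boundary edge to the middle of some edge of $\mathtt T_{k-1}^S$'' — which is precisely the defining operation for an arrow $\mathtt T_{k-1}^S\to \mathtt T_k^S$ in $\mathcal T$. In particular, $\mathtt T_k^S$ is trivalent with $k$ leaves (the new vertex has degree three and every previously internal vertex is unaffected), so $\mathtt T_k^S$ is indeed a vertex at level $k$ of $\mathcal T$.

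Concatenating these arrows for $k=4,\dots,n$ yields the desired directed path
\[
\mathtt T_3 = \mathtt T_3^S \longrightarrow \mathtt T_4^S \longrightarrow \cdots \longrightarrow \mathtt T_n^S
\]
from the source of $\mathcal T$ to $\mathtt T_n^S$, which proves the corollary. I expect no real obstacle here: the statement is essentially a translation between the combinatorial recipe producing $\mathbb T_S$ and the edge relation defining $\mathcal T$, and the only thing to check carefully is that ``replacing a leaf-edge by a cherry'' and ``subdividing an edge and attaching a new pendant edge'' describe the same local move, which is clear by inspection.
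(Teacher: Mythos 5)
Your proposal is correct and follows the same route as the paper: the paper's proof simply observes that the unlabeled trees underlying $\mathbb T_S$ give the path $\mathtt T_3\to\mathtt T_4^S\to\dots\to\mathtt T_n^S$, and your argument just spells out in more detail why each step of Algorithm~\ref{alg:tree assoc to seq} (replacing a leaf-edge by a cherry) matches the edge relation of $\mathcal T$.
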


\begin{proof}
The underlying unlabeled trees in the sequence $\mathbb T_S=(T_3,T_4^S,\dots,T_n^S)$ associated to $S$ define the path $\mathtt T_3\to\mathtt T_4^S\to\dots\to\mathtt T_n^S$ in $\mathcal T$.
\end{proof}

%%%%%%%%%%%%%%%%%%%%%%%%%%%%%%%%%%%%%
\footnotesize
\newcommand{\etalchar}[1]{$^{#1}$}

\bigskip
\noindent
Lara Bossinger\\
Universidad Nacional Aut\'onoma de M\'exico\\
Instituto de Matem\'aticas Unidad Oaxaca\\
Le\'on 2, altos, Oaxaca de Ju\'arez, Centro Hist\'orico \\ 68000 Oaxaca, M\'exico\\
Email: \texttt{lara@im.unam.mx}

\end{document}